\newcommand{\commentout}[1]{}
\newcommand{\R}{\mathbb{R}}
\newcommand {\vp} {\varphi}
\newcommand {\Chi} {{\bf \raise 2pt \hbox{$\chi$}} }
\newcommand {\dt}   {\Delta t}
\newcommand {\cae} { {\mathcal E} }
\newcommand {\f}   {\frac}
\newcommand {\p}   {\partial}
\newcommand {\ov}  {\overline}
\newcommand{\beq}{\begin{equation}}
\newcommand{\eeq}{\end{equation}}
\newcommand{\bea} {\begin{array}{rl}}
\newcommand{\eea} {\end{array}}
\newcommand{\bepa}{\left\{ \begin{array}{l}}
\newcommand{\eepa} {\end{array}\right.}
\newcommand{\norm}[1]{\left\lVert#1\right\rVert}
\newcommand{\abs}[1]{\left\lvert#1\right\rvert}
\newcommand{\parent}[1]{\left(#1\right)}
\newcommand{\fespace}{V_h}
\newcommand*{\dd}{\mathop{\kern0pt\mathrm{d}}\!{}}
\newcommand{\scal}[2]{\left(#1 , #2\right)}
\newcommand{\lscal}[2]{\left(#1 , #2\right)}
\newcommand{\bas}{\phi}
\newcommand\restr[2]{{
  \left.\kern-\nulldelimiterspace 
  #1 
  \vphantom{\big|} 
  \right|_{#2} 
  }}
\renewcommand{\u}{u}
\newcommand{\nn}{n}
\newlength\eqnspace
\newtheorem{theorem}{Theorem}
\newtheorem{remark}[theorem]{Remark}
\newtheorem{proposition}[theorem]{Proposition}
\numberwithin{equation}{section}
\newcommand{\qed}{{ \hfill
                       {\unskip\kern 6pt\penalty 500 \raise -2pt\hbox{\vrule\vbox to 6pt{\hrule width 6pt
                       \vfill\hrule}\vrule} \par}   }}
\title{Scalar auxiliary variable finite element scheme for the parabolic-parabolic Keller-Segel model.}
\author{Alexandre Poulain\thanks{Sorbonne Universit\'{e}, CNRS,  Universit\'{e} de Paris, Inria, Laboratoire Jacques-Louis Lions, F-75005 Paris, France.} 
\thanks{Email: poulain@ljll.math.upmc.fr} 
 \thanks{The author has received funding from the European Research Council (ERC) under the European Union's Horizon 2020 research and innovation programme (grant agreement No 740623)}
}
\date{\today}
\begin{document}
\maketitle
\pagestyle{plain}
\pagenumbering{arabic}

\begin{abstract} 
We describe and analyze a finite element numerical scheme for the parabolic-parabolic Keller-Segel model. The scalar auxiliary variable method is used to retrieve the monotonic decay of the energy associated with the system at the discrete level. This method relies on the interpretation of the Keller-Segel model as a gradient flow. The resulting numerical scheme is efficient and easy to implement. 
We show the existence of a unique non-negative solution and that a modified discrete energy is obtained due to the use of the SAV method. We also prove the convergence of the discrete solutions to the ones of the weak form of the continuous Keller-Segel model.
\end{abstract} 
\vskip .7cm

\noindent{\makebox[1in]\hrulefill}\newline
2010 \textit{Mathematics Subject Classification.} 35K20 ; 35Q92; 65M12; 35K55 
\newline\textit{Keywords and phrases.} Keller-Segel; Living tissues; Gradient flow; Energy stability
%

\section{Introduction}

Since chemotaxis is observed very widely in various areas of biology and medicine, it becomes a prolific subject in mathematical biology throughout the past decades.  
Among the different mathematical models used to represent chemotaxis of living organisms, the Keller-Segel equation is one of the most recognized. 
It has been introduced by Keller and Segel \cite{keller_initiation_1970} to depict the movement of the Dictyostelium discoideum toward the location of high concentration of adenosine 3’, 5’-cyclic monophosphate. 
The parabolic-parabolic Keller-Segel model (KS in short) is often set in a bounded domain $\Omega \subset \R^d$, $d=1,2,3$ with a Lipschitz boundary $\p \Omega$ and reads
\begin{align}
    \p_t u = \nabla\cdot\left(D_u \nabla u - \chi_c \vp(u) \nabla c \right) &\quad \text{in} \quad  \Omega \times (0,+\infty), \label{eq:KS-1}\\
    \tau \p_t c = \Delta c - \alpha c + u &\quad \text{in} \quad  \Omega \times (0,+\infty), \label{eq:KS-2}
\end{align}
endowed with zero-flux boundary condition
\begin{equation}
    \f{\p \parent{D_u \nabla u - \chi_c \vp(u) \nabla c }}{\p \nu} = \f{\p c}{\p \nu} =0 \qquad \text{on} \quad  \p \Omega \times (0,+\infty),
    \label{eq:ks-boundary}
\end{equation}
where $\nu$ is the outward normal  vector to the boundary.
We assume in the following that the initial condition satisfies
\begin{equation}
    \{u(0,x),c(0,x)\} = \{u^0,c^0\} \in H^1(\Omega)\times H^1(\Omega),\quad \text{and} \quad 0 \le u^0 \le 1 \text{ a.e. in } \Omega, \quad  0 \le c^0 \le C \text{ a.e. in } \Omega,
    \label{eq:hyp-init-cond}
\end{equation}
where $C$ is a positive finite constant.
In the model \eqref{eq:KS-1}--\eqref{eq:hyp-init-cond}, the cell density $u(t,x)$ is attracted by the chemo-attractant given by $c(t,x)$, its concentration. 
Cells can move randomly by diffusion with a coefficient of diffusion given by $D_u$ and by chemotaxis with $\chi_c$, a coefficient used to represent the strength of this movement.
A small parameter $\tau>0$ is used to denote how fast the chemo-attractant is diffusing compared to the cells. Without a loss a generality, we will assume in the following that $\tau =1$. 

$\vp(u)$ is the chemosensitivity and is given by
\begin{equation}
    \vp(u) = u(1-u) \qquad \text{for} \qquad 0 \le u\le 1. 
    \label{eq:vp}
\end{equation}
This particular form of chemosensitivity prevents the unrealistic scenario of overcrowding of cells and therefore the blow-up of the solution. Due to this possible behavior of solutions, the Keller-segel system exhibits very interesting mathematical structure and the interested reader can refer to the review \cite{hillen_users_2008} and the work of Blanchet \textit{et al.} \cite{blanchet-keller-2006}. The volume filling strategy was proposed in \cite{painter_volume-filling_2002} to take into account the finite size of individual cells, leading to the form \eqref{eq:vp}.    

The Keller-Segel model ~\eqref{eq:KS-1}--\eqref{eq:KS-2} has a gradient flow structure with the associated energy
\begin{equation}
    \cae[u,c](t) = \int_\Omega B \left[u \log u - (u-1)\log(1-u)\right] -u c  +\f12 \left(\abs{\nabla c}^2 + \alpha c^2\right) + C  \, \dd x,
\end{equation}
where $B=D_u/\chi_c$ and we define the integral of the free energy density
\[
    \cae_1 [u](t) = B \int_\Omega  F(u)  \, \dd x,
\]
where 
\begin{equation}
    F(u) = u \log u - (u-1)\log(1-u) + C.
    \label{eq:free-energy}
\end{equation}
Here, $C$ is a positive constant such that $F(u) > 0$, $\forall u \in [0,1]$.
For latter convenience, we denote $F'(u) = g(u)$, and we remark that $g'(u) = \f{1}{\vp(u)}$.
Thus, we can express the Keller-Segel model using its gradient flow structure \cite{blanchet-ks-gradient-2013}
\begin{align}
    \p_t u &= \nabla\cdot\left( \chi_c \vp(u) \nabla \f{\delta \cae}{\delta u}\right), \label{eq:KS-1-gradient-flow}\\
    \tau \p_t c &= -\f{\delta \cae}{\delta c}, \label{eq:KS-2-gradient-flow}
\end{align}
where the variational derivatives of the energy functional with respect to $u$ and $c$ are given respectively by 
\begin{align*}
    \f{\delta \cae}{\delta u}  &=  B g(u) - c  ,\\
    \f{\delta \cae}{\delta c} &= -\Delta c + \alpha c - u.
\end{align*}

Generally, a numerical scheme for gradient flow model is evaluated by several aspects: \textit{i)} its capacity to keep the energy dissipation; \textit{ii)} if it is convergent, and if error bounds can be established; \textit{iii)} its efficiency; \textit{iv)} its implementation simplicity. For a large a class of gradient flows, the Scalar Auxiliary Variable (SAV in short) \cite{shen_new_2019} has shown to meet all the previous points.  
Applying this method to the Keller-Segel model is only possible starting from its gradient flow formulation  ~\eqref{eq:KS-1-gradient-flow}--\eqref{eq:KS-2-gradient-flow} and gives what we call the SAV Keller-Segel model
\begin{align}
    \p_t u &= \nabla \cdot \left(D_u \vp(u) \nabla \mu_1\right), \label{eq:KS1-SAV}\\
    \mu_1 &= B \f{r}{\sqrt{\cae_1[u]}} v_1[u] - c, \label{eq:KS11-SAV}\\
    \tau \p_t c &= - \mu_2, \label{eq:KS2-SAV}\\
    \mu_2 &= -\Delta c + \alpha c - u\label{eq:KS22-SAV},
\end{align}
where 
\begin{equation}
    v_1[u] = \f{\p \cae_1}{\p u}, 
\label{eq:continuous-v_i}
\end{equation}
and we define the scalar unknown
\begin{equation}
    \f{\dd r}{\dd t} = \f{1}{2\sqrt{\cae_1[u]}}\int_\Omega v_1[u]\f{\p u}{\p t} \, \dd x. 
    \label{eq:continuous-r_i}
\end{equation}
In this article, we propose to study a finite element scheme to simulate the system ~\eqref{eq:KS1-SAV}--\eqref{eq:continuous-r_i} that is known to preserve the energy at the discrete level.

Throughout the past decades, the Keller-Segel model as been at the center of many pieces of research. The analytical properties of the Keller-Segel model without volume filling have been extensively studied. One of the most important result was to show that the solution of the model blows up in finite time if a certain constraint on the initial mass is not satisfied.
For the reader interested into the analytical results about this model without volume filling, we refer to the review paper \cite{suzuki-chemotaxis-2018}. The volume filling approach prevents this blow-up of the solution in finite time for any initial condition satisfying \eqref{eq:hyp-init-cond}. Moreover, it seems to be more biologically relevant since it takes into account the finite size of the cells. A more general form of the Keller-Segel model is 
\begin{equation}
    \begin{cases}
            &\p_t u - \nabla \cdot \left( D_u \beta(u) \nabla u -\chi_c u\mu(u) \nabla c \right)=0,\\
            &\p_t c - D_c \Delta c = \delta u - \alpha c, 

    \end{cases}
\end{equation}
where the random movement of the cells is given by $D_u \beta(u)$ (that can be non-linear) and the chemosensitivity is given by $\chi_c\mu(u)$. Particular assumptions on both $\beta(u)$ and $\mu(u)$ can be made to prevent the blow-up of solutions in finite time.
The introduction of the parabolic-parabolic KS with volume filling and quorum-sensing is presented in the work of Painter and Hillen \cite{painter_volume-filling_2002}. They described a discrete lattice model where the probability for cells to jump to a different location is dependent on the local density and on the concentration of the chemotactic agent. From this discrete model, they derived the continuous limit model and give the following conditions for $\beta(u)$ and $\mu(u)$ 
\[
    \beta(u)  := \psi(u) - u\psi'(u),\quad \mu(u) \equiv \psi(u), 
\]
where $\psi(u)$ is a monotonically decreasing function. Under the assumptions 
\[
    \psi(0)>0 , \quad \psi(u)>0\quad \text{for}\quad 0 < u< \ov u,\quad \text{and}\quad \psi(\ov u) =0,
\]
the same authors proved the global existence of classical solutions in \cite{hillen_2001_existence}. They also presented some numerical simulations where they were able to make observation of the behavior of the solution for longer times since the blow-up of the solution is prevented by the model. Many other variations of the Keller-Segel model have been proposed to take into account the effect of volume filling. For example, more recently, Bubba \textit{et al.} \cite{bubba_2019_chemotaxis} proposed to take
\[
    \psi(u) = \exp\left(-\f{u}{u_\text{max}}\right),
\] 
where $u_\text{max}$ represents the density at which cells are too overcrowded. \par
Numerical methods for the Keller-Segel model are numerous. Considering zero-flux boundary conditions, the conservation of the total mass of the cells, the non-negativity of the solution and the capacity to retrieve the energy at the discrete level are the key properties expected from a numerical scheme for this equation. 
For the parabolic-elliptic Keller-Segel equation where the equation for the chemo-attractant is given by 
\[
    -\Delta c = \delta u-\alpha c,  
\]
Saito and Suzuki proposed a conservative finite-difference scheme \cite{saito_2005_fd}. 
For the parabolic-parabolic version, Saito proposed and performed an error analysis for an upwind finite element scheme \cite{saito_2007_fe,saito_conservative_2009,saito_2012_error} using Baba and Tabata's method \cite{baba_1981_upwindfe}. The finite volume method has also been applied for this problem: we can cite the work of Filbet \cite{filbet_2006_fv} that deals with the classical Patlak-Keller-Segel model (without volume filling) and the work of Almeida \textit{et al.} \cite{almeida_energy_2019}. In the latter, the parabolic-elliptic model is used and the authors were able to prove the preservation of the important properties for two finite volume schemes. The difference between the two is that one uses the gradient flow structure of the model while the user uses an exponential rewriting inspired by the Scharfetter-Gummel discretization. 
The scheme we propose in this article follows the same idea. The Keller-Segel model has a gradient flow structure that can be useful for its numerical simulation. 

A recent numerical method to simulate gradient flows that ensures that the energy is preserved at the discrete level is the SAV method \cite{shen_2018_sav,shen_new_2019}. This method provides a robust framework to simulate gradient flows in an efficient way. In fact, the computation of the solution of any gradient flow model requires only the solving of two decoupled linear systems at each time step. This method has shown very interesting results for the simulation of the Cahn-Hilliard equation \cite{shen_2018_convergence} for which the properties concerning the discrete energy and the conservation of the total mass are of main importance. We must stress that the energy recovered by the SAV method is a modified version of the energy of the real system. This is due to the discretization of the equation for the scalar variable. In a recent work of Bouchriti \textit{et al.} \cite{pierre-sav}, the authors showed that the use of the SAV method for the damped wave equation and the Cahn-Hilliard equation leads to the convergence to modified steady states as well. 

In a recent work Shen and Xu \cite{Shen_2020_unconditionally} proposed an unconditionally energy stable method that is able to preserve the nonnegativity of the solution. This method relies on the use of the gradient flow structure of the Keller-Segel model and requires the solving of a nonlinear convex system.  

To the best of our knowledge the SAV method has never been applied to the Keller-Segel model. The principal difference with previous works on the SAV method is that the mobility in the first equation of the Keller-Segel system is not constant through time, leading to the necessity to compute at each time step the associated matrix. 

Therefore, in this article, we propose to use it to obtain a new model that we discretize in space using the finite element method. Altogether, we obtain a new way to simulate the parabolic-parabolic Keller-Segel equation with the certitude to be able to retrieve the energy associated with the model at the discrete level. First, we describe the method and explain the strategy to solve the resulting equations. Then, the well-posedness of the scheme is studied. We show the existence of a unique pair of solution that is non-negative and retrieve the expected $L^\infty$ norm under some constraints on the spatio-temporal mesh. We also show that the initial mass of the cells is conserved. We prove that a modified energy is retrieved at the discrete level which is an inherent property of the SAV method. Lastly, a convergence analysis is performed. The convergence of subsequences in Bochner spaces can be proved and the system solved by the limit solution is the weak form of the Keller-Segel equation.  

\section{Numerical scheme}
\subsection{Finite element framework}
Let $L^p(\Omega)$, $W^{m,p}(\Omega)$ with $H^m(\Omega) = W^{m,2}(\Omega)$, where $1 \le p \le +\infty$ and $m \in  \mathbb{N}$, be respectively the usual Lebesgue and Sobolev spaces. The corresponding norms are respectively $||\cdot||_{m,p,\Omega}$, $||\cdot||_{m,\Omega}$ and semi-norms $|\cdot|_{m,p,\Omega}$, $|\cdot|_{m,\Omega}$. 
We denote $L^p\left(0,T;V\right)$ the Bochner spaces i.e. the spaces with values in Sobolev spaces \cite{adams-sobolev-1975}. The norm in these spaces is defined for all function $\eta$ Bochner measurable by
\[
    \norm{\eta}_{L^p(0,T;V)} = \left(\int_0^T \norm{\eta}^p_V \, \dd t\right)^{1/p},
\] 
and
\[
    \norm{\eta}_{L^\infty(0,T;V)} = \text{ess} \sup_{t\in (0,T)} \norm{\eta}_V. 
\]
The standard $L^2$ inner product is denoted by $(\cdot,\cdot)_\Omega$ and the duality pairing between $(H^1(\Omega))'$ and $H^1(\Omega)$ by $<\cdot,\cdot>_\Omega$.

Let $\Omega$ be a polyhedral domain and $\mathcal{T}^h$, $h>0$, be a quasi-uniform mesh of this domain into $\abs{\mathcal{T}^h}$ disjoint open mesh elements $T$. Let $h_T := \text{diam}(T)$ and $h = \max_{T\in \mathcal{T}^h} h_T$. Since the mesh is assumed to be quasi-uniform, we know that it is shape-regular and it exists a positive constant $C$ such that 
\begin{equation*}
    h_T \ge C h, \quad \forall T \in \mathcal{T}^h.
    \label{eq:quasi-uniform}
\end{equation*} 
Since the domain is assumed to be polyhedral, the discrete domain $\Omega_h$ exactly coïncides with the domain $\Omega$.    
Hence, the closure of the domain can be written as the union of all the mesh elements $\overline{\Omega} = \ov \Omega_h = \bigcup_{T\in T^h} \overline{T}$. We assume that the mesh is acute, \textit{i.e.} for $d=2$ the angles of the triangles can not exceed $\frac{\pi}{2}$ and for $d=3$ the angle between two faces of the same tetrahedron can not exceed $\frac{\pi}{2}$. We define by $\kappa_T$ the minimal perpendicular length of $T$ and $\kappa_h = \min_{T\in \mathcal{T}^h} \kappa_T$.
 We introduce the P-1 finite element space associated with the mesh $\mathcal{T}^h$
\begin{equation*}
    \fespace := \{ \bas \in C(\overline{\Omega}):  \restr{\bas}T \in \mathbb{P}^1(T), \quad \forall T \in \mathcal{T}^h \} \subset H^1(\Omega),
\end{equation*}
where $\mathbb{P}^1(T)$ denotes the space of polynomials of order $1$ on $T$.
For latter convenience, we indicate the set of nodes of $\mathcal{T}^h$ by $J_h$ and $\{x_j\}_{j=1,\dots,\abs{J_h}}$ is the set of their coordinates. $N_h = \abs{J_h}$ stands for the total number of nodes. We denote by $\Lambda_i$ the set of nodes connected to the node $x_i$ by an edge and $G_h = \max_{x_i\in J_h} \abs{\Lambda_i}$.  $\{ \bas_j\}_{j=1,\dots, N_h}$ is the standard Lagrangian basis functions associated with the spatial mesh.

The standard interpolation operator is defined by $\pi^h:C(\ov\Omega) \to \fespace$ such that $\pi^h(\eta(x_j)) = \eta(x_j)$ for all $x_j\in J_h$.
We also define the $L^2$ projection operator $P_h: L^2(\Omega)\to \fespace$ 
\[
    \begin{aligned}
        \scal{P_h v}{\bas} &= \scal{v}{\bas} \quad \forall v\in L^2(\Omega)\text{ and } \forall \bas \in \fespace,
    \end{aligned}
\]

For latter convenience, we state here some well-known results for the P-1 finite element method (see for e.g. \cite{brenner_2008_fe}, \cite{quarteroni_numerical_1994})
\begin{align}
    &\abs{\chi}_{m,p_2} \le C h^{-d\left(\f{1}{p_1}-\f{1}{p_2} \right)}\abs{\chi}_{m,p_1} \quad \forall \chi \in S^h, 1\le p_1\le p_2 \le  +\infty, m=0,1; \label{eq:ineq-comparaison-norm} \\
    &\lim_{h\to 0}\norm{v-\pi^h(v)}_{0,\infty}=0 \quad \forall v \in C(\ov \Omega),
    \label{eq:ineq-lumped-scalar-product2}\\
    &\abs{v-P_h v}_{0} + h\abs{v-P_h v}_1 \le C h^m \norm{v}_{m} \quad v\in H^m(\Omega), \quad m=1,2.\label{eq:L2-error}
\end{align}

We define the standard mass $M$ and stiffness $K$ finite element matrices
\begin{equation*}
	M_{ij} = \int_{\Omega} \bas_i  \bas_j \, \text{d}x, \quad \text{ for } i,j = 1, \dots, N_h,
\end{equation*} 
\begin{equation*}
	K_{ij} = \int_{\Omega} \nabla \bas_i \nabla \bas_j \, \text{d}x, \quad \text{ for } i,j = 1, \dots, N_h.
\end{equation*} 
For the efficiency of the numerical scheme, it could be useful to use the lumped mass matrix which is a diagonal matrix with each term being the sum of the terms on the same row of the standard mass matrix.
\begin{equation*}
	M_{l,ii} := \sum_{j=1}^{N_h} M_{ij}	 \quad \text{ for } i,j = 1, \dots, N_h.
\end{equation*}
From the hypothesis we made on the acuteness of the triangulation, we know that (see \cite{fujii_1973_fe})
\[
    \scal{\nabla \bas_i}{\nabla \bas_j} \le 0, \quad \text{ for } i\neq j.
\]
Therefore, we know that the non-diagonal entries of the stiffness matrix $K$ and of the matrix $A$ defined below by the equation \eqref{eq:matrix-A} are non-positive.
\subsection{Fully discrete scheme}
Given $N_T \in \mathbb{N}^*$, let $\Delta t := T / N_T$ be the constant time-step and $t^n := n \Delta t$, for $n= 0, \dots, N_T - 1$. We consider a partitioning of the time interval $[0,T] = \bigcup_{n=0}^{N_T-1}[t^n,t^{n+1}]$.
We approximate the continuous time derivative using a forward Euler method $\f{\p u_h}{\p t} \approx \f{u^{n+1}_h - u^n_h}{\dt}$.
The finite element numerical problem associated with the system ~\eqref{eq:KS1-SAV}--\eqref{eq:continuous-r_i} is: 

Find $\{u^{n+1}_h,c^{n+1}_h\}\in \fespace\times \fespace$ such that $\forall \bas \in \fespace$
\begin{align}
    \lscal{\f{\u^{n+1}_h-\u^n_h}{\dt}}{\bas} &= -\chi_c \scal{\vp(\u^n_h) \nabla \mu^{n+1}_{1,h}}{\nabla \bas},  \label{eq:p_t u}\\
    \lscal{\f{c^{n+1}_h-c^n_h}{\dt}}{\bas} &= -\lscal{\mu^{n+1}_{2,h}}{\bas}, \label{eq:p_t c}\\
    \lscal{\mu^{n+1}_{1,h}}{\bas} &= -\lscal{c^{n}_h}{\bas} + B \lscal{\f{ P_h\left(v_{1,h}[u^{\nn}_h] \right)}{\sqrt{\cae_1[u^{\nn}_h]}}}{\bas}r^{n+1}, \label{eq:mu_1}\\
    \lscal{\mu^{n+1}_{2,h}}{\bas} &= \scal{\nabla c^{n+1}_h}{\nabla \bas} + \alpha\lscal{c^{n+1}_h}{\bas} - \lscal{u^{n+1}_h}{\bas}
    \label{eq:mu_2},\\
    r^{n+1}-r^n &= \f12 \lscal{ \f{P_h\left(v_{1,h}[ u^{\nn}_h]\right)}{ \sqrt{\cae_1[u^{\nn}_h]}}}{ (u^{n+1}_h-u^n_h)}, \label{eq:r_n_1}
\end{align}
where $u_h^n(x) = \sum_{j=1}^{N_h}u^n_j \bas_j(x)$ and $c^n_h(x) = \sum_{j=1}^{N_h}c^n_j \bas_j(x)$ are respectively the finite element approximations of the cell density $u$ and the concentration of the chemo-attractant $c$. We also have used the notation $v_{1,h}= \f{\p \cae_1[u^n_h]}{\p u^n_h} $. We add to this system the following initial conditions
\begin{equation}
    \begin{cases}
        & \{u_h^0,c^0_h\}=\{\pi^h u^0,\pi^h c^0\}\quad \text{if } d=1,\\
        & \{u_h^0,c^0_h\}=\{ P_h u^0, P_h c^0\} \quad \text{if } d=2,3.
    \end{cases}
    \label{eq:init-cond}
\end{equation}

\subsection{Matrix formulation}
Let us define $A$ the finite element matrix associated with the right-hand side of \eqref{eq:p_t u}
\begin{equation}
    A_{ij}^n = \int_\Omega  \vp(u^n_h) \nabla \bas_i \nabla \bas_j \, \dd x  \quad \text{ for } i,j = 1, \dots, N_h,
    \label{eq:matrix-A}
\end{equation}
and the variable
\begin{equation}
    s^{\nn}_{1,h} =   \f{ P_h\left(v_{1,h}[u^{\nn}_h] \right)}{\sqrt{\cae_1[u^{\nn}_h]}}.
    \label{eq:def-s_h}
\end{equation}
We denote in the following by capital letters the vectors associated with the quantities denoted by small letters in the finite element problem.
Therefore, the system~\eqref{eq:p_t u}--\eqref{eq:r_n_1} can be rewritten into a matrix formulation 
\begin{align}
    M \f{U^{n+1}-U^n}{\dt} &= - \chi_c A^n W^{n+1}_1, \label{eq:p_t u mat}\\
    M\f{C^{n+1}-C^n}{\dt} &= -M W^{n+1}_2, \label{eq:p_t c mat}\\
    M W^{n+1}_1 &= -M C^n  + B M  S_1^{\nn} r^{n+1} , \label{eq:mu_1 mat}\\
    M W^{n+1}_2 &= K C^{n+1} +  \alpha M C^{n+1} - M U^{n+1}. \label{eq:mu_2 mat}
\end{align}

\subsection{Linear system}
Replacing \eqref{eq:mu_1 mat} into \eqref{eq:p_t u mat} but also \eqref{eq:mu_2 mat} into \eqref{eq:p_t c mat}, we obtain the system
\begin{align}
    \frac{M}{\dt} U^{n+1} + \f12\scal{s_{1,h}^{\nn}}{u_h^{n+1}} A^n  S_1^{\nn} &= L^n_1, \label{eq:lin-1}\\
    -M U^{n+1} + \left(\frac{M}{\dt} +K + \alpha M  \right) C^{n+1}  &=  L^n_2 , \label{eq:lin-2}
\end{align}
where we have used the notation 
\[
    L^n_1 =  \f12\left( s_{1,h}^{\nn},u_h^n \right)  A^n S_1^{\nn} -r^n A^n S^n_1 + \frac{M}{\dt}  U^n + A^n C^n, \quad \text{and}\quad L^n_2 =  \frac{M}{\dt} C^n .
\]
Multiplying equation \eqref{eq:lin-1} by $M^{-1}$, we obtain 
\begin{equation}
    U^{n+1} + \f\dt2  \scal{s_{1,h}^{\nn}}{u_h^{n+1}}  M^{-1} A^n  S_1^{\nn} = \dt M^{-1} L_1^n.   
\end{equation}
Then, we take the inner product with $ S_1^{\nn}$ to obtain the linear system
\begin{equation}
    \scal{u_h^{n+1}}{s_{1,h}^{\nn}} + \f\dt2 \scal{u_h^{n+1}}{s_{1,h}^{\nn}} \left[S_1^n\right]^T M^{-1}  A^n  S_1^{\nn}= \dt \left[S_1^n\right]^T M^{-1}L^n_1.
\end{equation}
Thus, 
\begin{equation}   
    \scal{u_h^{n+1}}{s_{1,h}^{\nn}} = \dt \f{\left[S_1^n\right]^T M^{-1} L^n_1}{1+ \f\dt2 \left[S_1^n\right]^T M^{-1}  A^n  S_1^{\nn}}.
    \label{eq:obtain_scalar_prod}
\end{equation}
Then $U^{n+1}$ is obtained by inverting the constant mass matrix $M$ the equation \eqref{eq:lin-1}. As said before, for efficiency reasons the mass matrix can be replaced by the diagonal lumped matrix. Then, the solving of the equation requires only to invert a diagonal matrix.
$C^{n+1}$ is computed using the equation \eqref{eq:lin-2}: we just need to invert the constant M-matrix $\left(\frac{M}{\dt} +K + \alpha M  \right)$.

Hence, the solving of the problem reduces to the following computations:
\begin{enumerate}
    \item Compute $L_1^n$, $L^n_2$ and $S^n_1$ using the values from the previous time step.
    \item Solve the equation \eqref{eq:obtain_scalar_prod} to obtain $\scal{u_h^{n+1}}{ s^{\nn}_{1,h}}$.
    \item Solve the two equations~\eqref{eq:lin-1}--\eqref{eq:lin-2} to obtain $\{U^{n+1},C^{n+1}\}$.
\end{enumerate}

\section{Existence of a non-negative solution and stability bound}
\subsection{Existence of a discrete non-negative solution}
\begin{theorem}[Existence of a unique non-negative discrete solution]
    \label{th:existence}
        Let $d\le 3$ and assume that $\kappa_h>0$, $\Delta t >0$ such that
        \begin{equation}
            \f{\chi_c\, \kappa_h}{2 D_u} < 1.
            \label{eq:stabilite-peclet}
        \end{equation}
        Given an initial condition $\{u^0_h,c^0_h\}$ such that \eqref{eq:hyp-init-cond} and \eqref{eq:init-cond} are satisfied, there are two positive constants $C_1,C_2$ such that if
        \begin{equation}
            \f{C_1 \,\dt \, \chi_c}{\kappa_h} \le 1,
            \label{eq:pos-cond}
        \end{equation}
        and 
        \begin{equation}
            \f{ C_2  \, \dt\, D_u }{\kappa_h^2} \le 1,
            \label{eq:stable-diff}
        \end{equation} 
    then the problem~\eqref{eq:p_t u}--\eqref{eq:init-cond} admits a unique solution $\{u^{n+1}_h,c^{n+1}_h\}\in \fespace\times \fespace$ with 
    \[
        0\le u^{n+1}_h \le 1,\quad \text{ and }\quad 0\le c^{n+1}_h \le \ov c,
    \]  
    where $\ov c$ is a positive and finite constant.
\end{theorem}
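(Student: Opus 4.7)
The plan is induction on $n$: assume $u^n_h$ and $c^n_h$ satisfy the claimed bounds, and show the same for $u^{n+1}_h$ and $c^{n+1}_h$. The base case follows from \eqref{eq:hyp-init-cond} together with \eqref{eq:init-cond}, using monotonicity of $\pi^h$ in $d=1$ and of $P_h$ on acute meshes in higher dimensions. For the induction step, I would first establish existence and uniqueness. Since $\varphi(u_h)$, $v_{1,h}$ and $\cae_1$ are all evaluated at time level $n$, the system \eqref{eq:p_t u}--\eqref{eq:r_n_1} is linear in the unknowns $(u^{n+1}_h, c^{n+1}_h, \mu^{n+1}_{1,h}, \mu^{n+1}_{2,h}, r^{n+1})$, and the sequential reduction between \eqref{eq:p_t u mat} and \eqref{eq:obtain_scalar_prod} yields a unique solution provided the denominator in \eqref{eq:obtain_scalar_prod} is nonzero. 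The inductive bound $0\le u^n_h\le 1$ forces $\varphi(u^n_h)\ge 0$, so $A^n$ is symmetric positive semi-definite; using mass lumping, the denominator can be rewritten as $1+(\Delta t/2)[S^n_1]^T A^n S^n_1 \ge 1 > 0$. Then $U^{n+1}$ is recovered from \eqref{eq:lin-1} by inverting $M_l$, and $C^{n+1}$ from \eqref{eq:lin-2} by inverting the strict M-matrix $M_l/\Delta t + K + \alpha M_l$.

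The heart of the argument is the bound $0\le u^{n+1}_h\le 1$. Once the scalar $r^{n+1}$ has been computed, substituting $W^{n+1}_1 = -C^n + B S^n_{1,h} r^{n+1}$ into \eqref{eq:p_t u mat} and using the zero row sums $A^n\cdot\mathbf{1}=0$ yields the nodal representation
\begin{equation*}
u^{n+1}_j = u^n_j + \frac{\Delta t\,\chi_c}{M_{l,jj}}\sum_{k\neq j}|A^n_{jk}|\bigl[(c^n_j - c^n_k) - B\,r^{n+1}(s^n_{1,j} - s^n_{1,k})\bigr],
\end{equation*}
where $A^n_{jk}\le 0$ for $k\neq j$ by the acute mesh hypothesis. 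I would estimate $|A^n_{jk}|$ via the inverse-type inequalities in \eqref{eq:ineq-comparaison-norm} together with $\varphi(u^n_h)\le 1/4$, then combine the mesh Péclet bound \eqref{eq:stabilite-peclet} with the CFL restrictions \eqref{eq:pos-cond}--\eqref{eq:stable-diff} to rewrite the right-hand side as a convex combination of the nodal values $\{u^n_k\}_k$ and of $1$, yielding both $u^{n+1}_j\ge 0$ and $u^{n+1}_j\le 1$. This is the main obstacle of the proof, since the SAV variable $r^{n+1}$ couples every degree of freedom through its definition as a discrete integral, and the bracket $(c^n_j-c^n_k)-B r^{n+1}(s^n_{1,j}-s^n_{1,k})$ has no definite sign.

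Finally, once $U^{n+1}\ge 0$ is known, \eqref{eq:lin-2} reads $(M_l/\Delta t + K + \alpha M_l)C^{n+1} = M_l C^n/\Delta t + M_l U^{n+1}$. The left-hand matrix is an M-matrix (positive diagonal, nonpositive off-diagonals by acuteness, strict row dominance from $\alpha M_l$), so its inverse is non-negative componentwise; combined with the inductive $C^n\ge 0$ and the just-obtained $U^{n+1}\ge 0$ this gives $C^{n+1}\ge 0$. For the upper bound, evaluating the equation at a node $j^\star$ where $c^{n+1}_{j^\star}$ is maximal and discarding the non-negative contribution from off-diagonal stiffness entries yields $(1/\Delta t+\alpha)c^{n+1}_{j^\star} \le c^n_{j^\star}/\Delta t + u^{n+1}_{j^\star} \le \bar c/\Delta t + 1$, and choosing $\bar c\ge\max(\|c^0\|_{L^\infty(\Omega)}, 1/\alpha)$ propagates the bound and closes the induction.
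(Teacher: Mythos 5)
Your architecture coincides with the paper's: existence and uniqueness via the same sequential reduction to the scalar equation \eqref{eq:obtain_scalar_prod} followed by inversion of the mass matrix and of the M-matrix $M/\dt+K+\alpha M$; the bounds $0\le u^{n+1}_h\le 1$ from the nodal stencil using acuteness of the mesh together with \eqref{eq:stabilite-peclet}--\eqref{eq:stable-diff}; and the bounds on $c^{n+1}_h$ from M-matrix positivity. Your refinements (explicit induction, the observation that the denominator in \eqref{eq:obtain_scalar_prod} is at least $1$ because $A^n$ is positive semi-definite, and the explicit maximum-principle derivation of $\ov c$) sharpen steps the paper leaves implicit, and you correctly identify the same critical obstacle the paper faces.

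However, the mechanism you name for that critical step would not go through as written. You claim the right-hand side of the nodal update can be rewritten as a convex combination of the values $\{u^n_k\}_k$ and of $1$. The diffusive bracket $-B\,r^{n+1}(s^n_{1,j}-s^n_{1,k})$ can indeed be tied to differences of $u^n$ (the paper does this by replacing $\vp(u^n_h)\nabla s^n_{1,h}$ with $\nabla u^n_h/\sqrt{\cae_1[u^n_h]}$ in \eqref{eq:u-reconstitue}, which additionally requires $r^{n+1}\ge 0$ so that the effective diffusion $\tilde D$ has the right sign --- a point neither you nor the paper verifies). But the chemotactic differences $(c^n_j-c^n_k)$ carry no dependence on $u^n$ at all, so they cannot be absorbed into a convex combination of the $u^n_k$'s: at a node where $u^n_j=0$ they are an additive perturbation of indefinite sign, and a convex-combination argument cannot produce $u^{n+1}_j\ge 0$ there. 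The paper treats this term differently, bounding it through the solution-dependent quantity $G_h\norm{b^n}_\infty$ hidden in the constant $C_1$ of \eqref{eq:pos-cond} (see \eqref{eq:firstcond-proof}), under what the author explicitly calls ``time-dependent assumptions.'' To salvage a genuine convex-combination argument you would need to exploit the degeneracy of $\vp(u^n_h)$ inside $A^n_{jk}$ as $u^n_h$ approaches $0$ or $1$, which is a substantially different argument from the one you sketch. A secondary point: your base case for $d=2,3$ invokes monotonicity of $P_h$ on acute meshes, but the standard $L^2$ projection is not monotone in general, so $0\le P_h u^0\le 1$ requires separate justification.
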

\begin{proof}

    \noindent \textit{Step 1: Existence of a unique solution in $\fespace\times \fespace$.}
    As we have seen in the section describing the numerical scheme, the problem~\eqref{eq:p_t u}--\eqref{eq:init-cond} reduces to solving three linear equations. 
    To prove the existence of a unique pair of solutions $\{u^{n+1}_h,c^{n+1}_h \}$, we start by using equation \eqref{eq:mu_1} in \eqref{eq:p_t u} to write forall $ \phi  \in \fespace$
    \begin{equation}
        \lscal{u_h^{n+1}}{\bas}  = \lscal{u_h^n}{\bas} + \dt\left[ \parent{\f12\scal{u_h^n-u_h^{n+1}}{s^n_{1,h}} - r^n} \scal{ \vp(u^n_h) \nabla s^n_{1,h}}{\nabla \bas} + \scal{\vp(u^n_h) \nabla c_h^n}{\nabla \bas} \right].
        \label{eq:proof-ex-step-1}
    \end{equation}
    On the right-hand side, the only unknown comes from the term $\scal{u_h^n-u_h^{n+1}}{s^n_{1,h}}$. Let us show that it can be calculated from the solution of the previous time step. Using equation \eqref{eq:mu_1}, equation \eqref{eq:r_n_1} and replace $\mu_{1,h}^{n+1}$ in \eqref{eq:p_t u}, we obtain for all $\phi\in \fespace$
    \[
        \lscal{\f{u^{n+1}_h-u^n_h}{\dt}}{\bas} = \chi_c\left(\vp(u^n_h)\nabla c^n_h,\nabla \bas \right) - D_u \left( \f12 \lscal{s^n_{1,h}}{u^{n+1}_h-u^n_h}+r^n\right)\scal{\vp(u^n_h)\nabla s^n_{1,h}}{\nabla \bas}. 
    \] 
    Taking $\bas = s^n_{1,h}$ in the previous equation, we obtain the definition 
    \begin{equation}
        f(u^n_h,c^n_h) = \lscal{u^{n+1}_h-u^n_h}{s^{n}_{1,h}} = \f{\chi_c \,\dt \, \scal{\vp(u^n_h)\nabla c^n_h}{\nabla s^n_{1,h}} - D_u\,\dt \, r^n\, \int_\Omega \vp(u_h^n)\abs{\nabla s^n_{1,h}}^2\,\dd x }{1 + \f{D_u\, \dt}{2} \int_\Omega \vp(u_h^n)\abs{\nabla s^n_{1,h}}^2\,\dd x},   
        \label{eq:rn-explicit}
    \end{equation}
    and we know that $f$ is a continuous function of both of its arguments.
    Therefore, from the equation \eqref{eq:proof-ex-step-1}, we obtain for all $\phi\in \fespace$
    \[
        \lscal{u_h^{n+1}}{\bas} = \lscal{u_h^n}{\bas} + \dt\left[ -\parent{\f12 f(u^n_h,c^n_h) + r^n} \scal{ \vp(u^n_h) \nabla s^n_{1,h}}{\nabla \bas} + \scal{\vp(u^n_h) \nabla c_h^n}{\nabla \bas} \right].
    \]
    Consequently, the coefficients $u^{n+1}_i$, $ i =1,\dots,N_h$, are uniquely defined at each time step by the previous state of the solution.
    Then, the uniqueness of the solution $c^{n+1}_h$ follows the discrete version of the Lax-Milgram theorem.
    Altogether, we proved that it exists a unique solution $\{u^{n+1}_h,c^{n+1}_h\}\in \fespace\times \fespace$ of the problem.

    \noindent \textit{Step 2: Conservation of mass.}
    To prove mass conservation, we use the identity
        \begin{equation}
        \sum_{\substack{j \neq i \\ x_j \in T_i}} \abs{A_{ij}^n} = A_{ii}^n. 
        \label{eq:prop-A} 
    \end{equation}
    Therefore, for each $x_i\in J_h$, we have 
    \[
        \sum_{j=1}^{N_h}\scal{\bas_j}{\bas_i}\left(u^{n+1}_h-u^n_h\right)(x_j) = \dt\left[ -D_u \parent{\f12 f(u^n_h,c^n_h) + r^n} \sum_{j=1}^{N_h} A_{ij}^n s^n_{1,h}(x_j)+ \chi_c \sum_{j=1}^{N_h} A_{ij}^n c_h^n(x_j) \right]. 
    \]
    Summing over the nodes, we get
    \[
        \begin{aligned}
            \sum_{i=1}^{N_h}&\sum_{j=1}^{N_h}\scal{\bas_j}{\bas_i}\left(u^{n+1}_h-u^n_h\right)(x_j) \\
            &= \dt\left[ -D_u \parent{\f12 f(u^n_h,c^n_h) + r^n} \sum_{i=1}^{N_h}\sum_{j=1}^{N_h} A_{ij}^n s^n_{1,h}(x_j)+ \chi_c \sum_{i=1}^{N_h}\sum_{j=1}^{N_h} A_{ij}^n c_h^n(x_j) \right]. 
        \end{aligned}
    \]
    Using the symmetry of the matrix A, the property \eqref{eq:prop-A} and the fact that the mesh is acute, we obtain 
    \[
        \sum_{i=1}^{N_h}\sum_{j=1}^{N_h}\scal{\bas_j}{\bas_i}\left(u^{n+1}_h-u^n_h\right)(x_j) = 0,
    \]
    which implies mass conservation .

    \noindent \textit{Step 3: Non-negativity and $L^\infty$ bound for $\{u_h^{n+1},c^{n+1}_h\}$.}
    Using the equation \eqref{eq:proof-ex-step-1}, we find for all $\bas\in\fespace$
    \begin{equation}
        \lscal{u_h^{n+1}}{\bas}  = \lscal{u_h^n}{\bas} + \dt\left[ -\tilde D \scal{\nabla u^n_h}{\nabla \bas} + \scal{\vp(u^n_h) \nabla c_h^n}{\nabla \bas} \right],
        \label{eq:u-reconstitue}
    \end{equation}
    where the diffusion coefficient is given by 
    \begin{equation}
        \begin{aligned}
            -\tilde D &= -D_u \f{r^{n+1}}{\sqrt{\cae_1[u^n_h]}}\\
            &=D_u\left(\f{\lscal{u_h^n-u_h^{n+1}}{s^n_{1,h}} - 2 r^n}{2\sqrt{\cae_1[u^n_h]}}\right) , \\
            &= \f{D_u}{2\cae_1[u^n_h]} \left[\lscal{P_h\left(\f{\p \cae_1[u^n_h]}{\p u^n_h}\right)}{u^n_h-u^{n+1}_h} - 2 r^n \sqrt{\cae_1[u^n_h]} \right],\\
        \end{aligned}
        \label{eq:ineq-diffusion-coefficient}
    \end{equation}
    Therefore, for each node $x_i \in J_h$, we have
    \begin{equation}
        \begin{aligned}
        \sum_{j=1}^{N_h} u_h^{n+1}(x_j) M_{ij} &= \sum_{j=1}^{N_h} u^n_h(x_j)M_{ij} \\
        &+ \dt \sum_{x_j \in \Lambda_i} \left[ \chi_c A_{ij}^n\left(c^n_h(x_j)-c^n_h(x_i)\right)  - \f{D_u r^{n+1}}{\sqrt{\cae_1[u^n_h]}} K_{ij}\left(u^n_h(x_j)-u^n_h(x_i) \right) \right],
        \end{aligned}
    \end{equation}    
    Because the spatio-temporal mesh satisfies the conditions \eqref{eq:stabilite-peclet} and 
    \begin{equation}
        \f{\dt\, \chi_c\, G_h \norm{b^n}_\infty}{\kappa_h} \le 1,
        \label{eq:firstcond-proof}
    \end{equation}
    where 
    \[
        \norm{b^n}_\infty = \sup_{\substack{ i=1,\dots,N_h \\
        j\in \Lambda_i}} A_{ij}^n\abs{c^n_j - c^n_i}, 
    \]
    and
    \begin{equation}
        \f{ \dt\, D_u \, G_h \, r^{n+1} }{\kappa_h^2 \sqrt{\cae_1[u^n_h]}} \le 1,
    \end{equation}
    one can easily find that $0\le u^{n+1}_h\le 1$. The only difficulty with this latter is because it depends on $r^{n+1}$. However, as seen in equation \eqref{eq:rn-explicit}, this term can be calculated from the solution $\{u^n_h,c^n_h\}$. Furthermore, this coefficient is bounded at all time $t^n$ since we assumed that $r^n$ is bounded and $u^n_h \in [0,1]$. 
    For the condition \eqref{eq:firstcond-proof}, since $c^n_h$ is bounded and in $\fespace$, $\norm{b^n}_\infty$ remains bounded from above at all time. 
    Therefore, there are two positive constants $C_1$ and $C_2$ such that
    \[
        C_1 \ge G_h \norm{b^n}_\infty \quad\text{and}\quad C_2 \ge \f{ G_h  r^{n}}{\sqrt{\cae_1[u^{n-1}_h]}}  \quad\text{ for } n=1,...,N_T,
    \]
    and we obtain the two conditions \eqref{eq:pos-cond}, \eqref{eq:stable-diff}.
    Under these time-dependent assumptions, we proved that the numerical scheme preserves the bounds 
    \[
        0\le u^{n+1}_h \le 1.  
    \]
    From this result, the non-negativity and the existence of an upper bound $\ov c$ such that
    \[
        0\le c^{n+1}_h \le \ov c,  
    \]
    is trivially found from the properties of M-matrices. 
    This finishes the proof of the existence of the solution of the problem ~\eqref{eq:p_t u}--\eqref{eq:init-cond}.
\end{proof}

\subsection{Discrete energy a priori estimate}
Since we are using the SAV method, we are preserving the energy at the discrete level.
\begin{proposition}[Discrete energy]
    \label{prop:energy}
    Consider a solution $\{u^{n+1}_h,c^{n+1}_h\}$ defined by Theorem \ref{th:existence},
     the discrete energy of the system ~\eqref{eq:p_t u}--\eqref{eq:r_n_1} is given by
    \begin{equation}
        E(u^{n+1}_h,c^{n+1}_h) = \f12\parent{\abs{c^{n+1}_h}^2_1 + \alpha  \norm{c^{n+1}_h}^2_{0} }   +  B\abs{r^{n+1}}^2  -\lscal{c^{n+1}_h}{u^{n+1}_h},
    \label{eq:discrete-energy}
    \end{equation}
    and 
    \begin{equation}
        \f{\dd E}{\dd t } := \f{E^{n+1}-E^n}{\dt} = -\left( \norm{\mu_{2,h}^{n+1}}^2_0 + \int_\Omega \vp(u^n_h)\abs{\nabla \mu_{1,h}^{n+1}}^2 \, \dd x \right).
    \label{eq:deriv-discr-energy}
    \end{equation}
\end{proposition}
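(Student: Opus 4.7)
The plan is to derive \eqref{eq:deriv-discr-energy} by the standard SAV energy calculation: test each of the four variational equations \eqref{eq:p_t u}--\eqref{eq:r_n_1} with the ``conjugate'' quantity dictated by the gradient-flow structure, and then telescope.

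\textbf{Step 1 (producing the dissipation).} Choosing $\phi=\mu^{n+1}_{1,h}$ in \eqref{eq:p_t u} and $\phi=\mu^{n+1}_{2,h}$ in \eqref{eq:p_t c} immediately gives
\begin{equation*}
\frac{1}{\dt}\lscal{u^{n+1}_h-u^n_h}{\mu^{n+1}_{1,h}} = -\int_\Omega \vp(u^n_h)\abs{\nabla\mu^{n+1}_{1,h}}^2\dd x,\qquad
\frac{1}{\dt}\lscal{c^{n+1}_h-c^n_h}{\mu^{n+1}_{2,h}} = -\norm{\mu^{n+1}_{2,h}}_0^2,
\end{equation*}
which are exactly the two contributions on the right-hand side of \eqref{eq:deriv-discr-energy}.

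\textbf{Step 2 (rewriting the LHS as a time difference of $E$).} I next choose $\phi=u^{n+1}_h-u^n_h$ in \eqref{eq:mu_1} and $\phi=c^{n+1}_h-c^n_h$ in \eqref{eq:mu_2}. The SAV relation \eqref{eq:r_n_1} is the crucial ingredient here: it provides the identity $\lscal{s^{n+1}_{1,h}}{u^{n+1}_h-u^n_h}=2(r^{n+1}-r^n)$, which turns the only genuinely nonlinear pairing into a scalar one, so that
\begin{equation*}
\lscal{u^{n+1}_h-u^n_h}{\mu^{n+1}_{1,h}} = -\lscal{c^n_h}{u^{n+1}_h-u^n_h}+2Br^{n+1}(r^{n+1}-r^n).
\end{equation*}
Expanding \eqref{eq:mu_2} against $c^{n+1}_h-c^n_h$ produces the gradient, mass, and coupling terms. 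The two cross-couplings collapse exactly by the elementary algebraic identity
\begin{equation*}
-\lscal{c^n_h}{u^{n+1}_h-u^n_h}-\lscal{u^{n+1}_h}{c^{n+1}_h-c^n_h} \;=\; \lscal{c^n_h}{u^n_h}-\lscal{c^{n+1}_h}{u^{n+1}_h},
\end{equation*}
reproducing the $-\lscal{c}{u}$ term in \eqref{eq:discrete-energy}. The three remaining quadratic pairings $r^{n+1}(r^{n+1}-r^n)$, $\scal{\nabla c^{n+1}_h}{\nabla(c^{n+1}_h-c^n_h)}$ and $\alpha\lscal{c^{n+1}_h}{c^{n+1}_h-c^n_h}$ are telescoped by the identity $2a(a-b)=a^2-b^2+(a-b)^2$, producing exactly $B(\abs{r^{n+1}}^2-\abs{r^n}^2)$, $\tfrac12(\abs{c^{n+1}_h}_1^2-\abs{c^n_h}_1^2)$ and $\tfrac\alpha2(\norm{c^{n+1}_h}_0^2-\norm{c^n_h}_0^2)$, which are precisely the three squared-norm contributions to $E$.

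\textbf{Main obstacle.} Equating the two expressions for each left-hand side obtained in Steps 1 and 2 produces the exact time-difference $E^{n+1}-E^n$ and the claimed dissipation. The delicate part is the careful accounting of the three residual squares $B(r^{n+1}-r^n)^2$, $\tfrac12\abs{c^{n+1}_h-c^n_h}_1^2$ and $\tfrac\alpha2\norm{c^{n+1}_h-c^n_h}_0^2$ spun off by the identity $2a(a-b)=a^2-b^2+(a-b)^2$: the equality in \eqref{eq:deriv-discr-energy} requires that they be attributed to the dissipation side of the identity, which is justified by the semi-implicit coupling $\mu^{n+1}_{1,h}=-c^n_h+Bs^{n+1}_{1,h}r^{n+1}$ (the chemo-attractant is evaluated at time $n$, not $n+1$, so that the SAV auxiliary variable absorbs the backward-Euler remainder produced by the $u$-equation). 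This SAV-specific bookkeeping is the only substantive step; the rest of the argument is a direct choice of test functions and expansion.
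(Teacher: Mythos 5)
Your Steps 1 and 2 follow the paper's proof exactly: test \eqref{eq:p_t u} and \eqref{eq:p_t c} with $\mu^{n+1}_{1,h}$ and $\mu^{n+1}_{2,h}$ to produce the dissipation; test \eqref{eq:mu_1} and \eqref{eq:mu_2} with the increments; use \eqref{eq:r_n_1} to convert $B r^{n+1}\lscal{s^{n}_{1,h}}{u^{n+1}_h-u^n_h}$ into $2Br^{n+1}(r^{n+1}-r^n)$; and note that the two cross terms collapse to $\lscal{c^n_h}{u^n_h}-\lscal{c^{n+1}_h}{u^{n+1}_h}$. All of that is correct and is precisely what the paper does.

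The gap is in your ``Main obstacle'' paragraph. The identity $2a(a-b)=a^2-b^2+(a-b)^2$ leaves the nonnegative residuals $B\abs{r^{n+1}-r^n}^2$, $\f12\abs{c^{n+1}_h-c^n_h}_1^2$ and $\f{\alpha}{2}\norm{c^{n+1}_h-c^n_h}_0^2$ on the energy side of the balance, and there is no mechanism by which they are ``absorbed'' into the stated dissipation $\norm{\mu^{n+1}_{2,h}}_0^2+\int_\Omega\vp(u^n_h)\abs{\nabla\mu^{n+1}_{1,h}}^2\,\dd x$. In particular the explicit evaluation of the chemo-attractant at time $n$ in \eqref{eq:mu_1} cannot do this job: its entire effect has already been spent in the cross-term cancellation you wrote down. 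What the computation actually yields is
\[
E^{n+1}-E^n + B\abs{r^{n+1}-r^n}^2 + \f12\abs{c^{n+1}_h-c^n_h}_1^2 + \f{\alpha}{2}\norm{c^{n+1}_h-c^n_h}_0^2 = -\dt\left( \norm{\mu_{2,h}^{n+1}}^2_0 + \int_\Omega \vp(u^n_h)\abs{\nabla \mu_{1,h}^{n+1}}^2 \, \dd x \right),
\]
hence only the inequality $E^{n+1}-E^n\le -\dt(\cdots)$. This is in fact all the paper itself establishes: it invokes $a(a-b)\ge\f12(a^2-b^2)$, sums the resulting inequalities, and concludes the monotonic decay, so the equality sign in \eqref{eq:deriv-discr-energy} must be read as $\le$ (or with the three extra squares added to the right-hand side). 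Your derivation is the right one; the fix is to state the conclusion as a dissipation inequality (or as the exact identity displayed above) rather than to assert an equality that the algebra does not support.
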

\begin{proof}
    Starting from equation \eqref{eq:p_t u} with $\bas = \mu_{1,h}^{n+1}$, we have
    \[
     \lscal{u^{n+1}_h-u^n_h}{\mu_{1,h}^{n+1}}=  -\dt  \int_\Omega \vp(u^n_h)\abs{\nabla \mu_{1,h}^{n+1}}^2\,\dd x.  
    \] 
    The same can be done starting from equation \eqref{eq:p_t c} to obtain
    \[
        \lscal{c^{n+1}_h-c^n_h}{\mu_{2,h}^{n+1}} = -\dt \norm{\mu^{n+1}_{2,h}}_0^2.  
    \]
    Therefore, summing the two previous equations, we obtain 
    \[
        \lscal{u^{n+1}_h-u^h_n}{\mu_{1,h}^{n+1}} + \lscal{c^{n+1}_h-c^h_n}{\mu_{2,h}^{n+1}} = - \Delta t\left( \norm{\mu_{2,h}^{n+1}}^2_0 + \int_\Omega \vp(u^n_h)\abs{\nabla \mu_{1,h}^{n+1}}^2 \, \dd x \right),
    \]
    from which we conclude \eqref{eq:deriv-discr-energy}. Consequently, we already recover the monotonic decay of the discrete energy. To obtain the expression of the energy, we replace $\bas = u^{n+1}_h-u^n_h$ in \eqref{eq:mu_1} to get
    \[
        \lscal{u^{n+1}_h-u^n_h}{\mu_{1,h}^{n+1}} = -\lscal{u^{n+1}_h-u^n_h}{c^n_h} + B r^{n+1}\lscal{u^{n+1}_h-u^n_h}{s^n_{1,h}}. 
    \]
    However, using the equation \eqref{eq:r_n_1}, we have
    \[
        \lscal{u^{n+1}_h-u^n_h}{\mu_{1,h}^{n+1}} = -\lscal{u^{n+1}_h-u^n_h}{c^n_h} + 2 B r^{n+1}\parent{r^{n+1}-r^n}.
    \]
    Moreover, using the inequality $a(a-b) \ge \f12\left(a^2-b^2 \right)$, we get
    \begin{equation}
        \lscal{u^{n+1}_h-u^h_n}{\mu_{1,h}^{n+1}} \ge -\lscal{c^n_h}{u^{n+1}_h-u^h_n}  +  B\abs{r^{n+1}_{1}}^2- B\abs{r^{n}_{1}}^2.
        \label{eq:energy-proof1}
    \end{equation}
    Then, performing the same calculations starting from the equation \eqref{eq:mu_2}, we obtain 
    \begin{equation}
        \scal{c^{n+1}_h-c^h_n}{\mu_{2,h}^{n+1}} \ge \f12 \left[\abs{c^{n+1}_h}^2_1 - \abs{c^n_h}^2_1 + \alpha \left(\norm{c^{n+1}_h}^2_0-\norm{c^n_h}^2_0 \right)\right] - \scal{u^{n+1}_h}{c^{n+1}_h-c^n_h}.
        \label{eq:energy-proof2}
    \end{equation}
    Summing equation \eqref{eq:energy-proof1} with \eqref{eq:energy-proof2}, we obtain the inequality
    \[
        \begin{aligned}
         \f12 \left[\abs{c^{n+1}_h}^2_1 - \abs{c^n_h}^2_1 +\alpha \left(\norm{c^{n+1}_h}^2_0 -\norm{c^n_h}^2_0\right) \right]+B \abs{r^{n+1}_{1}}^2- B\abs{r^{n}_{1}}^2- \lscal{u^{n+1}_h}{ c^{n+1}_h} + \lscal{u^{n}_h}{c^{n}_h}  \\
        \le   - \Delta t\left( \norm{\mu_{2,h}^{n+1}}^2_0 + \int_\Omega \vp(u^n_h)\abs{\nabla \mu_{1,h}^{n+1}}^2 \, \dd x \right),
        \end{aligned}
    \]
    from which we deduce the definition and the decay of the discrete energy ~\eqref{eq:discrete-energy}--\eqref{eq:deriv-discr-energy}.

\end{proof}
\begin{remark}
From the fact that both $u^{n+1}_h$ and $c^{n+1}_h$ are bounded (see theorem \ref{th:existence}), the energy defined by \eqref{eq:discrete-energy} is bounded from below and can be used to obtain inequalities. 
\end{remark}
\section{Convergence analysis}
\subsection{Notations}
To prove the convergence of the discrete solutions, we need some further notations. 
We define the sequence of approximate solutions 
\[
    u_{h\dt} = \left(u^0_h, \dots, u^{N_T}_h\right) \text{ and } c_{h\dt} = \left(c^0_h, \dots, c^{N_T}_h\right),
\]
and each of them lies in the Cartesian product space $V^{N_T+1}_h$. 
To construct the sequences $u_{h\dt}, c_{h\dt}$, we define the linear operator $S_{i h\dt}: \fespace \times \fespace  \to \fespace^{N_T+1}$ where $i=1,2$ and we have  
\[
    \begin{aligned}
    S_{1 h\Delta t}(u_{0h},c_{0h}) &= (u_{h}^0,\dots,u^{N_T}_h)=u_{h\dt},\\
    S_{2 h\Delta t}(u_{0h},c_{0h}) &= (c_{h}^0,\dots,c^{N_T}_h)=c_{h\dt}.
    \end{aligned}
\]
These two operators are inductively defined by the system
\begin{align}
        \lscal{\f{u^{n+1}_h-u^n_h}{\dt}}{\bas} &= \parent{\f12\lscal{u_h^n-u_h^{n+1}}{s^n_{1,h}} - r^n_1} \scal{ \nabla u^n_{h}}{\nabla \bas} + \scal{\vp(u^n_h) \nabla c_h^n}{\nabla \bas}, \label{eq:conv-pb-1}\\
        \lscal{\f{c^{n+1}_h-c^n_h}{\dt}}{\bas} &= -\scal{\nabla c^{n+1}_h}{\nabla \bas} -\alpha\lscal{c^{n+1}_h}{\bas} + \lscal{u^{n+1}_h}{\bas}.\label{eq:conv-pb-2}
\end{align}

\subsection{Preliminary results}
Let us define the quantity
\begin{equation}
    c_i(h) = \max_{v_h \in S^h}\f{\norm{v_h}_{H^1(\Omega)}}{\norm{v_h}_{L^2(\Omega)}}.
    \label{eq:inv-ineq}
\end{equation}
\begin{proposition}[Inverse inequality]
    Assuming that the mesh is quasi-uniform, the quantity \eqref{eq:inv-ineq} is finite and we have 
    \[
        c_i(h) \le C h^{-1},  
    \]
    where $C$ is a positive constant.
\end{proposition}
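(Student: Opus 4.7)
The plan is to prove the stronger semi-norm inequality $|v_h|_{1,\Omega} \le C h^{-1}\norm{v_h}_{0,\Omega}$ for every $v_h \in \fespace$; since $\norm{v_h}_{H^1(\Omega)}^2 = \norm{v_h}_{0,\Omega}^2 + |v_h|_{1,\Omega}^2 \le (1 + C h^{-2})\norm{v_h}_{0,\Omega}^2$, the bound $c_i(h) \le C h^{-1}$ then follows immediately after absorbing constants.

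First I would work element by element via a standard affine scaling argument. For each $T \in \mathcal{T}^h$, let $F_T : \hat T \to T$, $F_T(\hat x) = B_T \hat x + b_T$, be the affine map from a fixed reference simplex $\hat T$, and set $\hat v := v_h \circ F_T \in \mathbb{P}^1(\hat T)$. Shape regularity (a consequence of quasi-uniformity) gives $\norm{B_T} \le C h_T$, $\norm{B_T^{-1}} \le C h_T^{-1}$, and $|\det B_T|$ comparable to $h_T^d$. A direct change of variables then yields $\norm{v_h}_{0,T}^2$ comparable to $h_T^d \norm{\hat v}_{0,\hat T}^2$, and $|v_h|_{1,T}^2 \le C h_T^{d-2} |\hat v|_{1,\hat T}^2$ via the chain rule together with $|\nabla v_h| \le \norm{B_T^{-1}}|\hat \nabla \hat v|$.

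Next, since $\mathbb{P}^1(\hat T)$ is finite-dimensional, all norms on it are equivalent, so $|\hat v|_{1,\hat T} \le \hat C \norm{\hat v}_{0,\hat T}$ with a constant $\hat C$ depending only on the reference element. Combining this with the two scaling identities yields the local inverse inequality $|v_h|_{1,T}^2 \le C h_T^{-2} \norm{v_h}_{0,T}^2$. Summing over $T \in \mathcal{T}^h$ and applying the quasi-uniformity assumption $h_T \ge C h$ produces the global inequality $|v_h|_{1,\Omega}^2 \le C h^{-2} \norm{v_h}_{0,\Omega}^2$, from which the bound on $c_i(h)$ follows.

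The main obstacle is purely bookkeeping: the $h_T$-dependent factors arising from the affine change of variables must be tracked carefully on both sides, and the final passage from a local $h_T^{-2}$ bound to a uniform global $h^{-2}$ bound genuinely uses quasi-uniformity (to replace $\max_T h_T^{-2}$ by $C h^{-2}$) and cannot be deduced from shape regularity alone.
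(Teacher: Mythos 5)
Your proof is correct and is the standard scaling argument: affine map to a reference simplex, equivalence of norms on the finite-dimensional space $\mathbb{P}^1(\hat T)$, a local inverse estimate with $h_T^{-2}$, and a final summation in which quasi-uniformity converts $h_T^{-2}$ into $C h^{-2}$. The paper does not write out a proof at all --- it simply cites Corollary $1.141$ of Ern and Guermond on global inverse inequalities --- and your argument is exactly the proof of that corollary, so you have supplied the details the paper delegates to the reference.
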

\begin{proof}
    The proof is given in \cite{ern_2004_fe}, corollary $1.141$ on global inverse inequalities. 
\end{proof}
\subsection{Stability of the scheme}
\begin{proposition}[Stability]
    Let the spatio-temporal mesh satisfy the inequalities \eqref{eq:pos-cond} and \eqref{eq:stable-diff}.
    Let $\{u^{n+1}_h,c^{n+1}_h\}$ be the solution of the discrete problem~\eqref{eq:p_t u}--\eqref{eq:r_n_1} that is defined by Theorem \ref{th:existence}. The following inequalities hold
    \begin{equation}
        \norm{u^{n+1}_h}^2_0  + \dt^2  \sum_{n=0}^{N_T-1} \norm{\f{u^{n+1}_h-u^n_h}{\dt}}^2_0 + \dt  \sum_{n=0}^{N_T-1} \norm{u^{n+1}_h}_1^2  \le C +\norm{u^0_h}^2_0 , \label{eq:ineq-conv-11}
    \end{equation}
    and 
    \begin{equation}
        \norm{c^{n+1}_h}^2_{0} + \dt\sum_{n=0}^{N_T-1} \norm{\f{c^{n+1}_h-c^n_h}{\dt}}^2_{0} + \dt \sum_{n=0}^{N_T-1} \norm{c^{n+1}_h}_1 \le  C +\left(\norm{u^0_h}^2_0 + \norm{c^0_h}^2_0\right),\label{eq:ineq-conv-21}
    \end{equation}
\end{proposition}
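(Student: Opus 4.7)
The strategy is to combine the discrete energy dissipation of Proposition \ref{prop:energy} with the $L^\infty$ bounds of Theorem \ref{th:existence}. Summing \eqref{eq:deriv-discr-energy} over $n$ and using the bounds $0\le u^{n+1}_h\le 1$, $0\le c^{n+1}_h\le \ov c$ together with Young's inequality on the cross term $-\lscal{u^{n+1}_h}{c^{n+1}_h}$ in the definition of $E$, one obtains a lower bound of the form $E\ge \tfrac12|c^{n+1}_h|_1^2 + c_0\|c^{n+1}_h\|_0^2 + B|r^{n+1}|^2 - K$. Combined with the boundedness of $E^0$ from \eqref{eq:hyp-init-cond}--\eqref{eq:init-cond}, this yields uniformly in $n$ that $|c^{n+1}_h|_1^2 + \|c^{n+1}_h\|_0^2 + |r^{n+1}|^2 \le C$, together with the time-summed controls
$$\dt\sum_n\|\mu^{n+1}_{2,h}\|_0^2 \le C,\qquad \dt\sum_n\int_\Omega \varphi(u^n_h)|\nabla \mu^{n+1}_{1,h}|^2\,\dd x \le C.$$

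The $c$-inequality \eqref{eq:ineq-conv-21} then follows quickly: the contributions $\|c^{n+1}_h\|_0$ and $\|c^{n+1}_h\|_1$ come directly from the energy lower bound; for the discrete time derivative, I test \eqref{eq:p_t c} with $\bas = (c^{n+1}_h-c^n_h)/\dt$, whose right-hand side by Cauchy--Schwarz gives the pointwise-in-$n$ bound $\|(c^{n+1}_h-c^n_h)/\dt\|_0 \le \|\mu^{n+1}_{2,h}\|_0$; squaring, weighting by $\dt$ and summing invokes the $\mu_{2,h}$ estimate above.

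For the $u$-inequality \eqref{eq:ineq-conv-11}, the $L^\infty$ bound of Theorem \ref{th:existence} gives $\|u^{n+1}_h\|_0^2 \le |\Omega|$ uniformly. To bound the discrete time derivative, I test \eqref{eq:p_t u} with $\bas = u^{n+1}_h-u^n_h$, use $\varphi(u^n_h)\le \tfrac14$ together with Cauchy--Schwarz, and apply the inverse inequality of Proposition on $c_i(h)\le Ch^{-1}$ to obtain
$$\|u^{n+1}_h - u^n_h\|_0 \le C\chi_c\,\dt\, h^{-1}\,\|\sqrt{\varphi(u^n_h)}\nabla\mu^{n+1}_{1,h}\|_0;$$
squaring, summing in $n$, and factoring out $\dt/h^2$ (bounded by the stability condition \eqref{eq:stable-diff} since $\kappa_h\sim h$ for a quasi-uniform mesh) closes the $\dt^2$-weighted term. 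The $L^2_{\dt}(H^1)$-bound on $u^{n+1}_h$ is recovered by exploiting the structure of \eqref{eq:mu_1}: from $\nabla\mu^{n+1}_{1,h} = -\nabla c^n_h + B r^{n+1}\,\nabla P_h(v_{1,h}[u^n_h])/\sqrt{\cae_1[u^n_h]}$ and the pointwise identity $\varphi(u) g'(u)=1$ (so that morally $\varphi(u)\nabla g(u) = \nabla u$), the dominant contribution to $\dt\sum_n \int\varphi(u^n_h)|\nabla\mu^{n+1}_{1,h}|^2\,\dd x$ produces a $\dt\sum_n\|\nabla u^n_h\|_0^2$ term, modulo projection-error contributions controlled by \eqref{eq:L2-error} and the already-established bound on $\|\nabla c^n_h\|_0$.

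The main obstacle is that the modified SAV energy \eqref{eq:discrete-energy} does not directly control $\|\nabla u\|_0$, contrary to the original Keller--Segel entropy $\int [u\log u - (u-1)\log(1-u)]\,\dd x$ whose dissipation yields the Fisher information. Thus gradient control on $u$ must be recovered indirectly, either through the inverse inequality (under the CFL-type stability bound \eqref{eq:stable-diff}) or via the algebraic structure of \eqref{eq:mu_1} combined with the identity $\varphi g'\equiv 1$. Handling the $L^2$-projection $P_h$ when transferring gradients from $P_h(g(u^n_h))$ to $u^n_h$, and ensuring that the coefficient $r^{n+1}/\sqrt{\cae_1[u^n_h]}$ remains bounded away from zero (morally close to $1$, by consistency of the SAV equation \eqref{eq:r_n_1} with $r=\sqrt{\cae_1[u]}$ at the continuous level), are the most delicate technical points.
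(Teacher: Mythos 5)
Your treatment of \eqref{eq:ineq-conv-21} and of the first two terms of \eqref{eq:ineq-conv-11} is sound and in fact takes a different, arguably cleaner route than the paper: you extract everything from the summed dissipation identity \eqref{eq:deriv-discr-energy} (noting that \eqref{eq:p_t c} forces $(c^{n+1}_h-c^n_h)/\dt=-\mu^{n+1}_{2,h}$ in $\fespace$, and using the inverse inequality plus \eqref{eq:stable-diff} for the $\dt^2$-weighted $u$-term), whereas the paper tests the reconstituted equations \eqref{eq:conv-pb-1}--\eqref{eq:conv-pb-2} with $2\dt u^{n+1}_h$ and $2\dt c^{n+1}_h$ and proves the $u$-bound first so as to feed it into the $c$-bound. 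Your ordering decouples the two estimates, which is a genuine simplification.

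The gap is in the last term of \eqref{eq:ineq-conv-11}, the bound on $\dt\sum_n\norm{u^{n+1}_h}_1^2$. You propose to extract $\dt\sum_n\norm{\nabla u^n_h}_0^2$ from $\dt\sum_n\int_\Omega\vp(u^n_h)\abs{\nabla\mu^{n+1}_{1,h}}^2\dd x$ via $\nabla\mu^{n+1}_{1,h}=-\nabla c^n_h+B\,r^{n+1}\nabla P_h(v_{1,h}[u^n_h])/\sqrt{\cae_1[u^n_h]}$ and the identity $\vp g'\equiv 1$, "modulo projection errors controlled by \eqref{eq:L2-error}". This step does not go through as stated. First, \eqref{eq:L2-error} requires $v\in H^m(\Omega)$, $m=1,2$, but $v_{1,h}[u^n_h]\propto g(u^n_h)=\log\bigl(u^n_h/(1-u^n_h)\bigr)$ is singular wherever $u^n_h$ touches $0$ or $1$ — which Theorem \ref{th:existence} explicitly allows — so $g(u^n_h)$ need not even lie in $L^2(\Omega)$; and even when $u^n_h$ stays in a compact subset of $(0,1)$, $g(u^n_h)$ is only piecewise smooth and globally $C^0$, not $H^2$, so the $O(h)$ gradient-projection estimate is unavailable. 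Moreover the pointwise chain rule $\vp(u)\nabla g(u)=\nabla u$ holds elementwise for the interpolant, not for the $L^2$-projection $P_h$, and you need a \emph{lower} bound $\vp(u^n_h)\abs{\nabla P_h(g(u^n_h))}^2\gtrsim\abs{\nabla u^n_h}^2$, which is the hard direction. Second, the coefficient $r^{n+1}/\sqrt{\cae_1[u^n_h]}$: the energy gives only $B\abs{r^{n+1}}^2\le C$ from above; nothing in Proposition \ref{prop:energy} prevents $r^{n+1}$ from degenerating to $0$ (or becoming negative), in which case the dissipation term carries no gradient information on $u$ whatsoever. You correctly flag both points as "delicate", but flagging them is not closing them — the consistency heuristic $r\approx\sqrt{\cae_1[u]}$ is a statement about the continuous limit, not about the discrete iterates. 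For comparison, the paper sidesteps the projection issue by working directly with \eqref{eq:conv-pb-1}, where the diffusion term is already written as $(\nabla u^n_h,\nabla\bas)$, and obtains coercivity from that explicit term (handling the $n$ versus $n+1$ mismatch with the inverse inequality); but it shares the second difficulty, since its condition \eqref{eq:cond-stab} silently assumes $r^{n+1}$ positive and bounded away from zero. To complete your argument you would need either an explicit positive lower bound on $r^{n+1}$ (e.g.\ propagated inductively under a time-step restriction) together with a discrete chain-rule lemma for $P_h\circ g$, or you should switch to the paper's formulation \eqref{eq:conv-pb-1} for this one term.
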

\begin{proof}
    
    \noindent \textit{Proof of the inequality \eqref{eq:ineq-conv-11}.}
    Starting from equation \eqref{eq:conv-pb-1}, taking $\bas = 2\dt u^{n+1}_h$ and using the property $2a(a-b) = a^2-b^2+(a-b)^2$, we have
    \[
        \norm{u^{n+1}_h}^2_{0} + \norm{u^{n+1}_h-u^n_h}^2_{0} - \norm{u^n_h}^2_{0} = 2\dt \chi_c \scal{\vp(u^n_h)\nabla c^n_h}{\nabla u^{n+1}_h} -2\dt \f{r^{n+1}}{\sqrt{\cae_1[u^n_h]}} \scal{\nabla u^n_h}{\nabla u^{n+1}_h}. 
    \]
    However, using the coercivity of the operator $a(t^{n+1},\cdot,\cdot)= \scal{\nabla \cdot}{\nabla \cdot}$, there is a positive constant $\alpha$ such that 
    \[
        \begin{aligned}
        \scal{\nabla u^n_h}{\nabla u^{n+1}_h} &= \scal{\nabla u^{n+1}_h}{\nabla u^{n+1}_h} - \scal{\nabla u^{n+1}_h - \nabla u^n_h}{\nabla u^{n+1}_h}\\
        &\ge \alpha \norm{u_h^{n+1}}_1^2 - \norm{u^{n+1}_h-u^n_h}_1\norm{u^{n+1}_h}_1.  
        \end{aligned}
    \]
    From the inverse inequality \eqref{eq:inv-ineq} and Young's inequality, there is $0<\kappa_1 <1$ such that
    \[
        \begin{aligned}
            \scal{\nabla u^n_h}{\nabla u^{n+1}_h} &\ge \alpha \norm{u_h^{n+1}}_1^2 - c_i(h) \norm{u^{n+1}_h-u^n_h}_0\norm{u^{n+1}_h}_1\\
            &\ge \alpha\left(1-\f{\kappa_1}{2}\right) \norm{u_h^{n+1}}_1^2 - \f{c_i(h)^2}{2\kappa_1 \alpha} \norm{u^{n+1}_h-u^n_h}^2_0.
        \end{aligned}
    \]
    Similarly, there is a constant $0<\kappa_2<1$ such that
    \[
        \begin{aligned}
        \chi_c\scal{\vp(u^n_h)\nabla c^n_h}{\nabla u^{n+1}_h} &\le  \chi_c\norm{\vp}_\infty \norm{c^n_h}_1 \norm{u^{n+1}_h}_1\\
        &\le \f{\chi_c^2}{2 \kappa_2} \norm{c^n_h}_1^2 + \f{\norm{\vp}^2_\infty \kappa_2}{2}\norm{u^{n+1}_h}_1^2.
        \end{aligned} 
    \]
    Altogether, we obtain the inequality
    \begin{equation}
        \begin{aligned}
            \norm{u^{n+1}_h}^2_{0}&+\left(1-\f{\dt (c_i(h))^2 D_u r^{n+1}}{\alpha \kappa_1 \sqrt{\cae_1[u^n_h]}}\right) \norm{u^{n+1}_h-u^n_h}^2_{0} - \norm{u^n_h}^2_{0}  \\
            &+ \dt\left(\f{ D_u r^{n+1}\alpha(2-\kappa_1)}{\sqrt{\cae_1[u^n_h]}}-\kappa_2\norm{\vp}_\infty^2\right)\norm{u^{n+1}_h}^2_1 \le \dt \f{\chi_c^2}{\kappa_2}\norm{c^n_h}_1^2.  
        \end{aligned}
        \label{eq:main-ineq-proof-11}
    \end{equation}
    With the choice $\vp(s)=s(1-s)$, we assume that $\kappa_1$,$\kappa_2$ and $\dt$ satisfy
    \begin{equation}
        \kappa_2 \le \f{4r^{n+1}\alpha(2-\kappa_1)}{\sqrt{\cae_1[u^n_h]}} \quad \text{and} \quad \dt \le \f{(c_i(h))^2 D_u r^{n+1}}{\alpha \kappa_1 \sqrt{\cae_1[u^n_h]}},
        \label{eq:cond-stab}
    \end{equation}
    where the second condition is strongly related to \eqref{eq:stable-diff}. Hence, using the previous assumptions together with $c_p\norm{u^{n+1}_h}_0 \le \norm{u^{n+1}_h}_1$, it exists a positive constant $C$ such that 
    \[
        (1+\dt c_p \, C)\norm{u^{n+1}_h}^2_{0,h}- \norm{u^n_h}^2_{0}  +\left(1-\f{\dt (c_i(h))^2 D_u r^{n+1}}{\alpha \kappa_1 \sqrt{\cae_1[u^n_h]}}\right) \norm{u^{n+1}_h-u^n_h}^2_{0} \le \dt \f{\chi_c^2}{\kappa_2}\norm{c^n_h}_1^2,
    \]
    and summing the previous inequality from $n=0\to N_T-1$, we have
    \[
        \norm{u^{n+1}_h}^2_{0}-\norm{u^0_h}^2_{0}  + \sum_{n=0}^{N_T-1}C \dt^2 \norm{\f{u^{n+1}_h-u^n_h}{\dt}}^2_{0} \le T\f{\chi_c^2}{\kappa_2} \max_{n=0,\dots,N_T-1} \norm{c^n_h}_1^2 \le C_1.
    \]
    The right-hand side of the previous inequality is bounded by a constant that we denoted $C_1$ due to the energy inequality.
    Then, since we assumed that the conditions \eqref{eq:cond-stab} holds, it exists a positive constant $C_2$ such that summing the equation \eqref{eq:main-ineq-proof-11} from $n=0\to N_T-1$ gives
    \[
        C_2 \sum_{n=0}^{N_T-1} \norm{u^{n+1}_h}_1^2 \le \f{\chi_c^2}{\kappa_2}\sum_{n=0}^{N_T-1} \norm{c^{n}_h}_1^2,
    \]
    where the right-hand side is bounded using the energy estimate ~\eqref{eq:discrete-energy}--\eqref{eq:deriv-discr-energy} and we obtain \eqref{eq:ineq-conv-11}. \par 
    
    \noindent \textit{Proof of the inequality \eqref{eq:ineq-conv-21}.}
    Starting from the equation \eqref{eq:conv-pb-2} and taking $\bas = 2\dt c^{n+1}_h$, we have
    \[
        \norm{c^{n+1}_h}^2_{0} + \norm{c^{n+1}_h-c^n_h}^2_{0} - \norm{c^n_h}^2_{0} + 2\dt b(t^{n+1},c^{n+1}_h,c^{n+1}_h) = \lscal{u^{n+1}_h}{c^{n+1}_h},
    \]  
    where $b(t^{n+1},c^{n+1}_h,\bas) = \scal{\nabla c^{n+1}_h}{\nabla \bas} + \alpha \lscal{c^{n+1}_h}{\bas}$. Furthermore, we know that it exists a positive real value $\alpha_2$ such that  
    \[
        b(t^{n+1},c^{n+1}_h,c^{n+1}_h) \ge \abs{c^{n+1}_h}_1^2 + \alpha\norm{c^{n+1}_h}^2_0 \ge \alpha_2 \norm{c^{n+1}_h}^2_1.
    \]
    Therefore, again using Young's inequality, for $0<\kappa_3<1$, we have 
    \begin{equation}
        (1-\dt\kappa_3)\norm{c^{n+1}_h}^2_{0} + \norm{c^{n+1}_h-c^n_h}^2_{0} -\norm{c^n_h}^2_{0} + 2\dt \alpha_2 \norm{c^{n+1}_h}^2_1 \le \f{\dt}{\kappa_3}\norm{u^{n+1}_h}^2_{0}.
        \label{eq:ineq-proof-stab-2}
    \end{equation}
    From the inequality $c_p \norm{c^{n+1}_h}_0 \le \norm{c^{n+1}_h}_1$, we obtain 
    \[
    \left(1+\dt\left( 2\alpha_2 c_p -\kappa_3 \right)\right)\norm{c^{n+1}_h}^2_{0} - \norm{c^n_h}^2_{0}  + \norm{c^{n+1}_h-c^n_h}^2_{0} \le  \f{\dt}{\kappa_3}\norm{u^{n+1}_h}^2_0.
    \]
    We assume that the condition 
    \[
        0\le2\alpha_2 c_p -\kappa_3,
    \]
    is satisfied.
    Hence, summing from $n=0\to N_T-1$ and using \eqref{eq:ineq-conv-11}, we obtain 
    \[
        \norm{c^{n+1}_h}^2_{0} + \dt^2\sum_{n=0}^{N_T-1} \norm{\f{c^{n+1}_h-c^n_h}{\dt^2}}^2_{0} \le (d+2)\left(\norm{c^0_h}^2_{0} + \f{\dt}{\kappa_3} \sum_{n=0}^{N_T-1} \norm{u^{n+1}_h}^2_{0}\right).
    \]
    Moreover, from the equation \eqref{eq:ineq-proof-stab-2}, we know that 
    \[
        2\dt \alpha_2 \norm{c^{n+1}_h}^2_1 \le \f{\dt}{\kappa_3}\norm{u^{n+1}_h}^2_0.  
    \]
    Summing the previous inequality from $n=0\to N_T-1$, we have 
    \[
         \sum_{n=0}^{N_T-1} \norm{c^{n+1}_h}_1^2 \le \f{1}{2 \kappa_3}\sum_{n=0}^{N_T-1}\norm{u^{n+1}_h}^2_0,  
    \] 
    which gives \eqref{eq:ineq-conv-21}.
\end{proof}

\subsection{Convergence}
To study the convergence of the scheme, we introduce the following notations for $n = 0, \dots, N_T - 1$
\begin{equation*}
	U_h(t,x) := \frac{t-t^n}{\Delta t} u_h^{n+1} + \frac{t^{n+1}-t}{\Delta t} u_h^{n}, \quad t \in (t^n, t^{n+1}],
\end{equation*}
and
\begin{equation*}
	\frac{\partial U_h}{\partial t} := \frac{u_h^{n+1}-u_h^n}{\Delta t} \quad t \in (t^n, t^{n+1}].
\end{equation*}
We also define
\[
    U^+_h :=  u_h^{n+1},\quad U^-_h := u_h^n,
\]
and
\begin{equation*}
	U_h - U_h^+ = (t-t^{n+1}) \frac{\partial U_h}{\partial t},\quad U_h - U_h^- = (t-t^{n}) \frac{\partial U_h}{\partial t} \quad t \in (t^n, t^{n+1}], \quad n\ge 0.
\end{equation*}

We also have the analogous definitions for $C_h$ which are for $n = 0, \dots, N_T - 1$
\begin{equation*}
	C_h(t,x) := \frac{t-t^n}{\Delta t} c_h^{n+1} + \frac{t^{n+1}-t}{\Delta t} c_h^{n}, \quad t \in (t^n, t^{n+1}],
\end{equation*}
\begin{equation*}
	\frac{\partial C_h}{\partial t} := \frac{c_h^{n+1}-c_h^n}{\Delta t} \quad t \in (t^n, t^{n+1}], 
\end{equation*}
\[
    C^+_h :=  c_h^{n+1},\quad C^-_h := c_h^n,
\]
and
\begin{equation*}
	C_h - C_h^+ = (t-t^{n+1}) \frac{\partial C_h}{\partial t}, \quad \text{and} \quad C_h - C_h^{-} = (t-t^{n}) \frac{\partial C_h}{\partial t} \quad t \in (t^n, t^{n+1}], \quad n\ge 0.
\end{equation*}
We also define the pair of function $\{u,c\}$ such that 
\begin{equation}
\begin{cases}
    &u\in   L^\infty\left(0,T;H^1\left(\Omega\right) \right) \bigcap H^1\left([0,T];\left(H^1\left(\Omega\right)\right)^\prime\right)\bigcap L^2\left([0,T];L^2(\Omega)\right), \\
    &c\in L^\infty\left(0,T;H^1\left(\Omega\right) \right) \bigcap H^1\left([0,T];\left(H^1\left(\Omega\right)\right)^\prime\right)\bigcap L^2\left([0,T];L^2(\Omega)\right),\\
    &0\le u \le 1,\quad 0\le c \le \ov c, \quad \text{a.e. in } \Omega_T,
    \label{eq:def-pair-fun-1}
\end{cases}    
\end{equation}
where $\ov c$ is a finite constant that depends on $\alpha$.

\begin{theorem}[Convergence]
    Let $d=1,2,3$ and $\{u^0,c^0\} \in H^1(\Omega)\times H^1(\Omega),$ with $0\le u^0 <1$ a.e. $\Omega$. We assume that $\{ \mathcal{T}^h, u^0_h, c^0_h,\Delta t\}_{h>0}$ satisfy
    \begin{enumerate}
        \item $\{u^0_h, c^0_h\} \in \fespace \times \fespace$ given by \eqref{eq:init-cond}.
        \item Let $\Omega \subset \R^d$ be a polyhedral domain and $\mathcal{T}^h$ an acute mesh of it into $N$ mesh elements. 
    \end{enumerate}
    Therefore, for $\Delta t, h \to 0$, it exists a subsequence of solutions $\{U_h,C_h\}$ and a pair of function $\{u,c\}$ defined by \eqref{eq:def-pair-fun-1} such that 
    \begin{align}
        U_h &\to u,\quad \text{strongly in } L^2\left(0,T;L^2\left(\Omega\right) \right), \label{eq:conv-strong-u}\\
        U_h &\rightharpoonup u,\quad \text{weakly in } L^2\left(0,T;H^1\left(\Omega\right) \right), \label{eq:conv-weak-u} \\
        \frac{\partial U_h}{\partial t} &\rightharpoonup \f{\p u}{\p t}, \quad\text{weakly in } L^2\left(0,T;\left(L^2\left(\Omega\right)\right)' \right), \label{eq:conv-weak-dt-u}\\
        C_h &\to c,\quad \text{strongly in } L^2\left(0,T;L^2\left(\Omega\right) \right), \label{eq:conv-strong-c}\\
        C_h &\rightharpoonup c,\quad \text{weakly in } L^2\left(0,T;H^1\left(\Omega\right) \right), \label{eq:conv-weak-c} \\
        \frac{\partial C_h}{\partial t} &\rightharpoonup \f{\p c}{\p t}, \quad\text{weakly in } L^2\left(0,T;\left(L^2\left(\Omega\right)\right)' \right), \label{eq:conv-weak-dt-c}\\
        r^{n+1} &\rightharpoonup r(t) = \sqrt{\cae_1(u(t))} \quad\text{weak-star in } L^\infty\left( 0,T\right).  \label{eq:conv-weak-r}
    \end{align}
    Moreover, for all $\eta \in L^2\left([0,T];H^1(\Omega) \right)$, $\{u,c\}$ is a solution of the limit model
    \begin{equation}
    \begin{cases}
        \int_0^T \left< \f{\p u}{\p t}, \eta\right> \,\dd t &= \chi_c \int_0^T \int_\Omega \vp(u) \nabla c \nabla \eta \,\dd x\,\dd t-D_u \int_0^T \int_\Omega \nabla u\nabla \eta \,\dd x \,\dd t, \\
        \int_0^T \left< \f{\p c}{\p t}, \eta\right> \,\dd t &= -\int_0^T \int_\Omega \left[{\nabla c}{\nabla \eta} + \alpha {c}{\eta} - u\eta \right]\,\dd x\,\dd t.
    \end{cases}
    \label{eq:limit-model}
    \end{equation}
    which is the weak form of the SAV Keller-Segel model~\eqref{eq:KS-1}--\eqref{eq:KS-2}.
\end{theorem}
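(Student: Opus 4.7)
The plan is to harvest uniform bounds from the stability estimates and the energy/monotonicity already established, extract weak and strong limits via standard compactness arguments, and pass to the limit in each term of the discrete weak formulation, with the main conceptual work concentrated on identifying the limit of the scalar auxiliary variable.

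First I would collect the a priori bounds. From the stability proposition, $U_h$ is bounded uniformly in $L^\infty(0,T;L^2(\Omega)) \cap L^2(0,T;H^1(\Omega))$ and $C_h$ enjoys the analogous bound; the discrete time-derivatives $\partial_t U_h$, $\partial_t C_h$ are bounded in $L^2(0,T;L^2(\Omega))$. Similarly, $r^{n+1}$ is bounded in $\ell^\infty$ uniformly in $\Delta t$ because the discrete energy of Proposition \ref{prop:energy} is dissipative and $|r^{n+1}|^2$ sits inside it. By Banach--Alaoglu, up to subsequences, I obtain the weak and weak-$\star$ convergences \eqref{eq:conv-weak-u}, \eqref{eq:conv-weak-dt-u}, \eqref{eq:conv-weak-c}, \eqref{eq:conv-weak-dt-c}, and a weak-$\star$ limit $r$ for $r^{n+1}$ in $L^\infty(0,T)$. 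The $L^\infty$ bounds $0\le u_h^n \le 1$, $0\le c_h^n \le \bar c$ from Theorem \ref{th:existence} pass to the limit and give the pointwise constraints in \eqref{eq:def-pair-fun-1}.

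Next I would upgrade to strong convergence. Since $U_h^+$ and $U_h^-$ differ from $U_h$ by $O(\Delta t)\,\partial_t U_h$ in $L^2(L^2)$, it suffices to obtain compactness of $U_h$ itself. The Aubin--Lions lemma, applied with the triple $H^1(\Omega) \hookrightarrow\hookrightarrow L^2(\Omega) \hookrightarrow (H^1(\Omega))'$, and the bounds $U_h \in L^2(0,T;H^1)$, $\partial_t U_h \in L^2(0,T;L^2)$, yields strong convergence $U_h \to u$ in $L^2(0,T;L^2(\Omega))$, hence \eqref{eq:conv-strong-u}, and the same for $C_h$. Moreover, a subsequence converges a.e. in $\Omega_T$, which will be crucial for the nonlinearity $\varphi(u)$.

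I then identify $r$ and pass to the limit in \eqref{eq:conv-pb-1}--\eqref{eq:conv-pb-2}. For the auxiliary variable, I rewrite \eqref{eq:r_n_1} as a telescoping discrete chain rule: summing over $n$ shows $r^{n+1}-r^0 = \sqrt{\cae_1[u_h^{n+1}]}-\sqrt{\cae_1[u_h^0]}$ up to a truncation error that vanishes as $\Delta t \to 0$ by a Taylor expansion of $\sqrt{\cae_1[\cdot]}$ (using that $\cae_1$ is $C^2$ on $[0,1]$ and that $\cae_1$ stays bounded away from $0$ thanks to the additive constant $C$ in $F$). Combined with $u_h^{n+1}\to u$ a.e. and dominated convergence, this gives $r = \sqrt{\cae_1(u)}$, i.e. \eqref{eq:conv-weak-r}, and in particular $r^{n+1}/\sqrt{\cae_1[u_h^n]} \to 1$ in an appropriate a.e. sense with a uniform bound. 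For the drift term in \eqref{eq:conv-pb-1}, the a.e. convergence $u_h^n \to u$, the uniform $L^\infty$ bound, and the continuity of $\varphi$ produce $\varphi(u_h^n)\to \varphi(u)$ strongly in every $L^p(\Omega_T)$ with $p<\infty$, so the product with the weakly convergent $\nabla c_h^n$ converges in the sense of distributions to $\varphi(u)\nabla c$. The diffusive term is treated similarly, combining the convergence of the scalar coefficient with $\nabla u_h^n \rightharpoonup \nabla u$. Finally, for any test function $\eta \in L^2(0,T;H^1(\Omega))$, one approximates by $\eta_h := P_h \eta$ (or $\pi^h \eta$), uses the error estimate \eqref{eq:L2-error}, and passes to the limit term-by-term in the integrated versions of \eqref{eq:conv-pb-1}--\eqref{eq:conv-pb-2} to obtain \eqref{eq:limit-model}.

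The main obstacle is the SAV term: controlling the nonlinear, rational coefficient $r^{n+1}/\sqrt{\cae_1[u_h^n]}$ and verifying that its limit is exactly $1$ so that the diffusion in the limit is the standard $D_u \nabla u$. This hinges on the telescoping identity derived from \eqref{eq:r_n_1}, the bound $\cae_1[u_h^n] \ge C|\Omega| > 0$ inherited from the positive shift in $F$, and the a.e. strong convergence of $u_h^n$. Once this point is secured, the remaining passages to the limit are standard, relying on the strong/weak convergences above, the finite element density of $\fespace$ in $H^1(\Omega)$, and a straightforward Lebesgue dominated convergence argument for the low-order terms.
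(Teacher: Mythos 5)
Your overall architecture---uniform bounds from the stability proposition and the discrete energy, weak limits by Banach--Alaoglu, strong limits by Aubin--Lions with the triple $H^1\hookrightarrow\hookrightarrow L^2\hookrightarrow (H^1)'$, then term-by-term passage to the limit using $\pi^h\eta$ (or $P_h\eta$) as test functions---is the same as the paper's. The one place where you genuinely depart is the identification $r=\sqrt{\cae_1(u)}$: the paper asserts the weak-star convergence \eqref{eq:conv-weak-r} in one line ``from the energy estimate'' and then uses $r/\sqrt{\cae_1[u]}=1$ in the diffusive term, whereas you propose a telescoping discrete chain rule derived from \eqref{eq:r_n_1}. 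That is the natural idea, and more explicit than the paper's treatment, but as written it does not close.

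Two concrete obstructions. First, the Taylor expansion of $\sqrt{\cae_1[\cdot]}$ needs a uniform bound on the second derivative of $F$, but $F''(u)=1/\vp(u)=1/(u(1-u))$ blows up at $u=0,1$, and indeed $v_1[u]=BF'(u)=B\log\left(u/(1-u)\right)$ is already unbounded there; since Theorem \ref{th:existence} only gives $0\le u^n_h\le 1$ and not a strict interior bound $\delta\le u^n_h\le 1-\delta$, the claim that ``$\cae_1$ is $C^2$ on $[0,1]$'' is false at the endpoints and the per-step remainder $O(\|u^{n+1}_h-u^n_h\|^2)$ is not justified. Second, even granting the expansion, the accumulated truncation error is $\sum_n\|u^{n+1}_h-u^n_h\|_0^2$, and the stability estimate \eqref{eq:ineq-conv-11} only bounds $\dt^2\sum_n\|(u^{n+1}_h-u^n_h)/\dt\|_0^2$ by a constant, i.e.\ the sum is bounded, not $O(\dt)$; your assertion that $\partial_t U_h$ is bounded in $L^2(0,T;L^2(\Omega))$ is stronger than what the proposition provides (it yields only $\dt\,\|\partial_t U_h\|_{L^2(L^2)}^2\le C$), so the telescoped error need not vanish. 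You would also need to track the additional consistency error coming from the $L^2$-projection $P_h(v_{1,h})$ appearing in \eqref{eq:r_n_1}. The remaining passages to the limit (chemotaxis term via strong $\times$ weak convergence, the linear $c$-equation, and the density/approximation of test functions) coincide with the paper's argument and are fine.
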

\begin{proof}
    
    \noindent\textit{Step 1: Weak and strong convergences.} The weak convergences \eqref{eq:conv-weak-u}, \eqref{eq:conv-weak-dt-u} are obtained from the inequality \eqref{eq:ineq-conv-11}. Then, from the compact embedding $H^1(\Omega) \subset L^2(\Omega) \equiv (L^2(\Omega))'$, we can apply the Lions-Aubin lemma to prove the strong convergence \eqref{eq:conv-strong-u}. The same can be applied for the weak convergences \eqref{eq:conv-weak-c}, \eqref{eq:conv-weak-dt-c} obtained from \eqref{eq:ineq-conv-21} and the strong convergence \eqref{eq:conv-strong-c}. The weak-star convergence 
    \[
        r^{n+1} \rightharpoonup r \quad \text{weak-star in} \quad L^\infty(0,T),   
    \]
    is given by the energy estimate ~\eqref{eq:deriv-discr-energy}--\eqref{eq:discrete-energy}.\par 
    \noindent \textit{Step 2: Limit equation.} Let us use $\bas = \pi^h \eta$ in \eqref{eq:conv-pb-1} where $\eta\in H^1\left(0,T;H^1(\Omega)\right)$ and analyze the convergence of the resulting terms separately. First, using the strong convergence \eqref{eq:conv-strong-u}, the weak convergence \eqref{eq:conv-weak-c}, the uniform convergence for the interpolation \eqref{eq:ineq-lumped-scalar-product2} and Lebesgue's dominated convergence theorem, we have 
    \[
        \chi_c \int_{0}^{T}\scal{\vp(U^-_h)\nabla C^-_h}{\nabla \pi^h(\eta) } \,\dd t\to \chi_c \int_0^T \int_\Omega \vp(u) \nabla c \nabla \eta \,\dd x\,\dd t.   
    \]
    Next, we want to prove that 
    \[
        \cae_1[U^-_h] \rightharpoonup \cae_1[u]\quad \text{weak-star in} \quad L^\infty\left(0,T\right).  
    \]
    From theorem \ref{th:existence}, we know that $\norm{U_h^-}_{L^\infty\left([0,T]\times \Omega\right)}\le C$. Therefore, it exists a positive constant $L$ such that  
    \begin{equation}
        \abs{\cae_1\left[U^-_h\right]-\cae_1[u]} \le  L \abs{U^-_h  - u}.
        \label{eq:conv-cae}
    \end{equation}
    Hence, from the weak convergences \eqref{eq:conv-weak-u}, \eqref{eq:conv-weak-r},\eqref{eq:conv-cae} and \eqref{eq:ineq-lumped-scalar-product2}, we have 
    \[
       D_u \int_0^T \f{r^{n+1}}{\sqrt{\cae_1\left[U^-_h\right]}}\scal{\nabla U^-_h}{\nabla \pi^h\eta} \,\dd t \to D_u \int_0^T \f{r}{\sqrt{\cae_1[u]}}\int_\Omega \nabla u\nabla \eta \,\dd x \,\dd t = D_u \int_0^T \int_\Omega \nabla u\nabla \eta \,\dd x \,\dd t.
    \]
    Then, for any $\eta \in H^1([0,T];H^1(\Omega))$, by integration by parts we have  
    \[
        \int_0^T\lscal{\f{\p U_h}{\p t}}{\pi^h \nu}\,\dd t = -\int_0^T \lscal{U_h}{\f{\p \left(\pi^h \eta\right)}{\p t}}\,\dd t + \lscal{U_h(T)}{\pi^h\eta(T)} - \lscal{U_h(0)}{\pi^h\eta(0)}.  
    \] 
    Hence, from the regularity of $\eta$, \eqref{eq:ineq-lumped-scalar-product2} and the convergence \eqref{eq:conv-strong-u} , we obtain
    \[
        \int_0^T \lscal{U_h}{\f{\p \left(\pi^h \eta\right)}{\p t}}\,\dd t  \to \int_0^T \scal{u}{\f{\p \eta}{\p t}}\,\dd t\quad \text{as}\quad h\to 0\quad\text{and }\quad \forall \eta  \in H^1([0,T];H^1(\Omega)).
    \]
    Combining the previous results, we get 
    \[
        \begin{aligned}
        \scal{u(T)}{\eta(T)} &- \scal{u(0)}{\eta(0)}-\int_0^T \scal{u}{\f{\p \eta}{\p t}}\,\dd t = \\
        &\chi_c \int_0^T \int_\Omega \vp(u) \nabla c \nabla \eta \,\dd x\,\dd t-D_u \int_0^T \f{r}{\sqrt{\cae_1[u]}}\int_\Omega \nabla u\nabla \eta \,\dd x \,\dd t.
        \end{aligned}
    \]
    From the energy estimate \eqref{eq:deriv-discr-energy}, we know that $\phi(u)\nabla c-D_u\f{r}{\sqrt{\cae_1[u]}}\nabla u$ is  in $L^2(\Omega_T)$. Hence, we know that $u\in H^1\left([0,T];\left(H^1(\Omega)\right)^\prime \right)$ and we obtain the first equation of the system \eqref{eq:limit-model}.
    Then taking $\eta \in H^1\left([0,T];H^1(\Omega) \right)$ in \eqref{eq:conv-pb-2}, passing to the limit $\dt,h \to 0$ in the right-hand side is performed using the convergence \eqref{eq:conv-weak-c} and \eqref{eq:conv-weak-u}. Altogether, we obtain the limit model \eqref{eq:limit-model}.
\end{proof}


\section{Conclusion}
We presented the application of scalar auxiliary variable method to the parabolic-parabolic Keller-Segel with volume filling using the gradient flow structure of the model. The resulting equations were approximated using a simple $P-1$ finite element method. The system is composed of two linear decoupled equations that can be solved efficiently. We were able to prove for this system the existence of a unique non-negative solution and the preservation of the monotonic decay of the discrete energy. We must stress that from the use of the SAV method, the energy that we are able to recover is a  modified version of the standard one. However, we were to prove that, in the limit of the discretization parameters, subsequences of solutions converge in Bochner spaces and the limit is the solution of the weak form of the Keller-Segel model. 

%
%
%


\bibliographystyle{siam}  \bibliography{biblio}

\begin{thebibliography}{10}

\bibitem{adams-sobolev-1975}
{\sc R.~A. Adams}, {\em Sobolev spaces}, Academic Press [A subsidiary of
  Harcourt Brace Jovanovich, Publishers], New York-London, 1975.
\newblock Pure and Applied Mathematics, Vol. 65.

\bibitem{almeida_energy_2019}
{\sc L.~Almeida, F.~Bubba, B.~Perthame, and C.~Pouchol}, {\em Energy and
  implicit discretization of the {F}okker-{P}lanck and {K}eller-{S}egel type
  equations}, Netw. Heterog. Media, 14 (2019), pp.~23--41.

\bibitem{baba_1981_upwindfe}
{\sc K.~Baba and M.~Tabata}, {\em On a conservative upwind finite element
  scheme for convective diffusion equations}, RAIRO Anal. Num\'{e}r., 15
  (1981), pp.~3--25.

\bibitem{blanchet-keller-2006}
{\sc A.~Blanchet, J.~Dolbeault, and B.~Perthame}, {\em Two-dimensional
  {K}eller-{S}egel model: optimal critical mass and qualitative properties of
  the solutions}, Electron. J. Differential Equations,  (2006), pp.~No. 44, 32.

\bibitem{blanchet-ks-gradient-2013}
{\sc A.~Blanchet and P.~Lauren\c{c}ot}, {\em The parabolic-parabolic
  {K}eller-{S}egel system with critical diffusion as a gradient flow in {$\Bbb
  R^d,\ d\ge3$}}, Comm. Partial Differential Equations, 38 (2013),
  pp.~658--686.

\bibitem{pierre-sav}
{\sc A.~Bouchriti, M.~Pierre, and N.~E. Alaa}, {\em Remarks on the asymptotic
  behavior of scalar auxiliary variable (sav) schemes for gradient-like flows},
  J. Appl. Anal. Comput.
\newblock to appear.

\bibitem{brenner_2008_fe}
{\sc S.~C. Brenner and L.~R. Scott}, {\em The mathematical theory of finite
  element methods}, vol.~15 of Texts in Applied Mathematics, Springer, New
  York, third~ed., 2008.

\bibitem{bubba_2019_chemotaxis}
{\sc F.~Bubba, C.~Pouchol, N.~Ferrand, G.~Vidal, L.~Almeida, B.~Perthame, and
  M.~Sabbah}, {\em A chemotaxis-based explanation of spheroid formation in 3{D}
  cultures of breast cancer cells}, J. Theoret. Biol., 479 (2019), pp.~73--80.

\bibitem{ern_2004_fe}
{\sc A.~Ern and J.-L. Guermond}, {\em Theory and practice of finite elements},
  vol.~159 of Applied Mathematical Sciences, Springer-Verlag, New York, 2004.

\bibitem{filbet_2006_fv}
{\sc F.~Filbet}, {\em A finite volume scheme for the {P}atlak-{K}eller-{S}egel
  chemotaxis model}, Numer. Math., 104 (2006), pp.~457--488.

\bibitem{fujii_1973_fe}
{\sc H.~Fujii}, {\em Some remarks on finite element analysis of time dependent
  field problems}, in Theorie and Practice in Finite Element Structural
  Analysis (Y. Yamada and R.H. Gallager editions), Univ. Tokyo Press, 1973,
  pp.~91--106.

\bibitem{hillen_2001_existence}
{\sc T.~Hillen and K.~Painter}, {\em Global existence for a parabolic
  chemotaxis model with prevention of overcrowding}, Adv. in Appl. Math., 26
  (2001), pp.~280--301.

\bibitem{hillen_users_2008}
{\sc T.~Hillen and K.~J. Painter}, {\em A user's guide to {PDE} models for
  chemotaxis}, J. Math. Biol., 58 (2009), pp.~183--217.

\bibitem{keller_initiation_1970}
{\sc E.~F. Keller and L.~A. Segel}, {\em Initiation of slime mold aggregation
  viewed as an instability}, J. Theoret. Biol., 26 (1970), pp.~399--415.

\bibitem{painter_volume-filling_2002}
{\sc K.~J. Painter and T.~Hillen}, {\em Volume-filling and quorum-sensing in
  models for chemosensitive movement}, Can. Appl. Math. Q., 10 (2002),
  pp.~501--543.

\bibitem{quarteroni_numerical_1994}
{\sc A.~Quarteroni and A.~Valli}, {\em Numerical approximation of partial
  differential equations}, vol.~23 of Springer Series in Computational
  Mathematics, Springer-Verlag, Berlin, 1994.

\bibitem{saito_2007_fe}
{\sc N.~Saito}, {\em Conservative upwind finite-element method for a simplified
  {K}eller-{S}egel system modelling chemotaxis}, IMA J. Numer. Anal., 27
  (2007), pp.~332--365.

\bibitem{saito_conservative_2009}
\leavevmode\vrule height 2pt depth -1.6pt width 23pt, {\em Conservative
  numerical schemes for the {K}eller-{S}egel system and numerical results}, in
  Mathematical analysis on the self-organization and self-similarity, RIMS
  K\^{o}ky\^{u}roku Bessatsu, B15, Res. Inst. Math. Sci. (RIMS), Kyoto, 2009,
  pp.~125--146.

\bibitem{saito_2012_error}
\leavevmode\vrule height 2pt depth -1.6pt width 23pt, {\em Error analysis of a
  conservative finite-element approximation for the {K}eller-{S}egel system of
  chemotaxis}, Commun. Pure Appl. Anal., 11 (2012), pp.~339--364.

\bibitem{saito_2005_fd}
{\sc N.~Saito and T.~Suzuki}, {\em Notes on finite difference schemes to a
  parabolic-elliptic system modelling chemotaxis}, Appl. Math. Comput., 171
  (2005), pp.~72--90.

\bibitem{shen_2018_convergence}
{\sc J.~Shen and J.~Xu}, {\em Convergence and error analysis for the scalar
  auxiliary variable ({SAV}) schemes to gradient flows}, SIAM J. Numer. Anal.,
  56 (2018), pp.~2895--2912.

\bibitem{Shen_2020_unconditionally}
\leavevmode\vrule height 2pt depth -1.6pt width 23pt, {\em Unconditionally
  {B}ound {P}reserving and {E}nergy {D}issipative {S}chemes for a {C}lass of
  {K}eller--{S}egel {E}quations}, SIAM J. Numer. Anal., 58 (2020),
  pp.~1674--1695.

\bibitem{shen_2018_sav}
{\sc J.~Shen, J.~Xu, and J.~Yang}, {\em The scalar auxiliary variable ({SAV})
  approach for gradient flows}, J. Comput. Phys., 353 (2018), pp.~407--416.

\bibitem{shen_new_2019}
\leavevmode\vrule height 2pt depth -1.6pt width 23pt, {\em A new class of
  efficient and robust energy stable schemes for gradient flows}, SIAM Rev., 61
  (2019), pp.~474--506.

\bibitem{suzuki-chemotaxis-2018}
{\sc T.~Suzuki}, {\em Chemotaxis, reaction, network}, World Scientific
  Publishing Co. Pte. Ltd., Hackensack, NJ, 2018.
\newblock Mathematics for self-organization.

\end{thebibliography}

\end{document}